\theoremstyle{plain}
\newtheorem{theorem}{Theorem}[section]
\newtheorem{cor}{Corollary}[theorem]
\theoremstyle{definition}
\newtheorem{definition}{Definition}[section]
\newtheorem{remark}{Remark}[section]
\begin{document}

         \title[Orthogonality in a vector space with a topology ]{Orthogonality in a vector space with a topology And a generalization of Bhatia-$\mathbf{\breve{S}}$emrl Theorem}
\author[Sain, Roy, Paul]{Debmalya Sain, Saikat Roy, Kallol Paul}
                \newcommand{\acr}{\newline\indent}

\address[Sain]{Department of Mathematics\\ Indian Institute of Science\\ Bengaluru 560012\\ Karnataka \\INDIA\\ }
\email{saindebmalya@gmail.com}

\address[Roy]{Department of Mathematics\\ National Institute of Technology Durgapur\\ Durgapur 713209\\ West Bengal\\ INDIA}
\email{saikatroy.cu@gmail.com}

\address[Paul]{Department of Mathematics\\ Jadavpur University\\ Kolkata 700032\\ West Bengal\\ INDIA}
\email{kalloldada@gmail.com}

\thanks{The research of Dr. Debmalya Sain is sponsored by Dr. D. S. Kothari Post-doctoral fellowship. The research of Mr. Saikat Roy is supported by CSIR MHRD in terms of Junior research Fellowship under the supervision of Prof. Satya Bagchi.}

\subjclass[2010]{Primary 57N17, Secondary 47L05, 46A03}
\keywords{Vector space with a topology; Birkhoff-James orthogonality; locally convex spaces; Bhatia-$ \breve{S} $emrl Theorem}

\begin{abstract}
We introduce the notion of orthogonality in a vector space with a topology on it. To serve our purpose, we define orthogonality space for a given vector space $ X $, using the topology on it. We show that for a suitable choice of orthogonality space, Birkhoff-James orthogonality in a Banach space is a particular case of the orthogonality introduced by us. We characterize the right additivity of orthogonality in our setting and obtain a necessary and sufficient condition for a Banach space to be smooth as a corollary to our characterization. Finally, using our notion of orthogonality, we obtain a topological generalization of the Bhatia-$\breve{S}$emrl Theorem.
\end{abstract}


\maketitle
\section{Introduction}

The purpose of the present work is to generalize the classical concept of orthogonality to the setting of vector space with a topology on it. The importance and the all-pervasiveness of orthogonality in Euclidean geometry can be hardly overemphasized. Roberts \cite{R}, Birkhoff \cite{B}, James \cite{J,Ja}, and Day \cite{D} were the first mathematicians to introduce and study orthogonality in the general setting of normed linear spaces and metric linear spaces. Although Birkhoff-James orthogonality is arguably the most natural orthogonality type in a normed linear space, it is now well-known that there are several distinct concepts of orthogonality in a normed linear space which are equivalent only if the norm is induced by an inner product. In particular, this illustrates that the various generalizations of the concept of orthogonality in Euclidean spaces to the setting of normed linear spaces are in some sense quite natural. In recent times, several mathematicians have explored the geometry of bounded linear operators between Banach spaces from the point of view of Birkhoff-James orthogonality \cite{BS, S, Sa, SP}. Indeed, it is well-accepted that Birkhoff-James orthogonality techniques are extremely valuable in the study of geometry of Banach spaces. In this article, we present a natural generalization of Birkhoff-James orthogonality in a vector space with a topology on it. As far as we understand, this is the minimal requirement on a space to have a satisfactory concept of orthogonality.

Let $ ( X, + , \cdot ) $ denote a real vector space of dimension strictly greater than one and let $ \uptau $ be a topology on $ X $. The scalar field $ \mathbb{R} $ is always endowed with the usual topology. It is worth mentioning that, in general the vector space operations need not be continuous under $ \uptau $. Whenever the vector space operations happen to be continuous under $ \uptau $ and every singleton set in $ (X, \uptau ) $ is closed, the pair $ ( X, \uptau ) $ is called a topological vector space. It should be noted that every Banach space is itself a topological vector space under the topology induced by the norm. A Banach space $ (X, \|\cdot \|) $, when considered as a topological vector space, will be denoted by $ (X, \uptau_{\|\cdot\|}), $ where $ \uptau_{\| \cdot \|} $ is the topology on $ X $ , induced by the norm. For the sake of brevity, throughout this article, we will use the same letter $ X $ to stand for either of $ (X, + , \cdot) $ and $ (X, \|\cdot \|) $. The meaning will be clear from the context. Given a Banach space $ X, $ let $ B_X =\{ x\in X : \| x \| \leq 1 \} $ and $ S_X = \{ x\in X : \| x \| =1 \} $ denote the unit ball and the unit sphere of $ X $ respectively. Let $ X^* $ denote the dual space of $ X. $ Given any two elements $ x, y \in X,  $ we say that $ x $ is Birkhoff-James orthogonal to $ y, $ written as $ x \perp_B y, $ if $ \| x \| \leq \| x + \lambda y \| $ for all scalars $ \lambda $. In Theorem $ 2.1 $ of \cite{Ja}, James proved that $ x \perp_B y $ if and only if there exists a continuous linear functional $ f \in X^* $ such that $ f(x)=\|f\|\|x\| $ and $ f(y)=0. $ This particular characterization of Birkhoff-James orthogonality serves as an excellent guide for introducing orthogonality in a vector space with a topology. We recall from \cite{Ja} that Birkhoff-James orthogonality is said to be right additive in $ X $ if given any $ x, y, z \in X, $ $ x \perp_B y $ and $ x \perp_B z $ implies that $ x \perp_B (y+z). $ \\

The first step towards formulating a meaningful definition of orthogonality in a vector space with a topology is to consider the projective relation $ `` \rho" $ on $ X\setminus \{ 0 \} $ defined in the following way:\\

 Given any $ u, v \in X\setminus \{ 0 \} $, $ u~ \rho ~ v $ if and only if $ u=tv $ for some non-zero $ t\in \mathbb{R} $.\\ 

\noindent It is easy to see that $ \rho  $ is an equivalence relation on $ X\setminus \{ 0 \} $. Therefore $ \rho  $ partitions $ X\setminus \{ 0 \} $ into disjoint equivalence classes $ [u] $, where $ [u] = \{ v: v=tu , ~ t\in \mathbb{R}\setminus \{0\} \} $. $ X\setminus \{ 0 \}, $ equipped with this equivalence relation, is denoted by $ ( X\setminus \{ 0 \} )/{\rho} $. We now present the following definition which also plays a crucial role in introducing orthogonality in a vector space with a topology. 

\noindent 
\begin{definition}\label{ admissible}
Let $ X $ be a vector space and let $ \rho $ be the projective equivalence relation on $ X\setminus \{ 0 \} $. A subset $ \mathcal{A}_X $ of $ X $ is said to be $ \rho $-admissible if $ \mathcal{A}_X $ contains exactly one element from each distinct equivalence class under $ \rho $.
\end{definition}

\noindent We would like to remark that  by applying the axiom of choice, we can choose exactly one element from each distinct equivalence class under $ \rho $. Therefore, the existence of at least one $ \rho $-admissible subset is always guaranteed. We further note that for each non-zero element $ x\in X $, $ x $ corresponds to a unique element $ a_x $ of $ \mathcal{A}_X $. As an immediate application of the above definition, we next introduce the concept of orthogonality in a vector space equipped with a topology.

\begin{definition}\label{orthogonality}
Let $ X $ be a vector space and let $ \uptau  $ be a topology on $ X $. Let $ \mathcal{A}_X $ be a given $ \rho $-admissible subset of $ X $ and let $ \mathcal{F} $ be a family of scalar valued continuous functions defined on $ (X,\uptau) $. For $ u, v \in \mathcal{A}_X $, we say that $ u\perp_{(\uptau, \mathcal{F},\mathcal{A}_X)} v $ if there exists $ f \in \mathcal{F} $ such that the following two conditions are satisfied:\\

(i) $ f(u) = \underset{z\in \mathcal{A}_X}{sup}|f(z)| $.\\

(ii) $ f(\lambda v) = 0 $ for all scalars $ \lambda $.\\

\noindent For any two non-zero elements $ x, y \in X $, we say that $ x\perp_{(\uptau, \mathcal{F},\mathcal{A}_X)} y $ if $ a_x \perp_{(\uptau, \mathcal{F},\mathcal{A}_X)}  a_y $, where $ a_x $ and $ a_y $ are the corresponding elements of $ x $ and $ y $ in $ \mathcal{A}_X $ respectively. If either of  $ x, y $ is zero we define $ x\perp_{(\uptau, \mathcal{F},\mathcal{A}_X)} y $ and $ y\perp_{(\uptau, \mathcal{F},\mathcal{A}_X)} x $. 
\end{definition}

\noindent The triplet $ (\uptau, \mathcal{F}, \mathcal{A}_X) $ in above definition will be called the orthogonality space of the topological space $ (X,\uptau) $ with respect to the family $ \mathcal{F} $ of scalar valued continuous functions and the $ \rho $-admissible subset $ \mathcal{A}_X $ of $ X $. It would perhaps be more satisfactory to apply the term ``orthogonality space" to the triplet $ {(\uptau, \mathcal{F},\mathcal{A}_X)}  $, rather than to $ (X,\uptau) $. After all, the topological structure of $ (X,\uptau) $  has not been changed in any way by the fact that we now also have a $ \rho $-admissible subset $ \mathcal{A}_X $ of $ X $ and a family of scalar valued continuous functions $ \mathcal{F} $ defined on $ (X, \uptau). $ On the other hand, it is apparent that the newly introduced orthogonality depends heavily on the choice of $ \uptau $, $ \mathcal{F} $ and $ \mathcal{A}_X $. Indeed, we will deal with more of it in the present work. Let us note that the orthogonality $ \perp_{(\uptau, \mathcal{F}, \mathcal{A}_X)} $ is homogeneous. In other words, given any two $ x, y \in X $ and any two non-zero scalars $ \lambda, \mu \in \mathbb{R}, $ we have that $ x\perp_{(\uptau, \mathcal{F},\mathcal{A}_X)} y $ if and only if $ \lambda x\perp_{(\uptau, \mathcal{F},\mathcal{A}_X)} \mu y $. For a given orthogonality space $ {(\uptau, \mathcal{F},\mathcal{A}_X)} $, let $ \mathcal{P}_{(\uptau, \mathcal{F},\mathcal{A}_X)} $ be the subset of $ X \times X $, defined by $ \mathcal{P}_{(\uptau, \mathcal{F},\mathcal{A}_X)} = \{ (x,y) \in X\times X :  x\perp_{(\uptau, \mathcal{F},\mathcal{A}_X)} y \} .$ The set $ \mathcal{P}_{(\uptau, \mathcal{F},\mathcal{A}_X)} $ will be called the orthogonality set of the corresponding orthogonality space $ {(\uptau, \mathcal{F},\mathcal{A}_X)} $. In the same spirit, for a Banach space $ (X, \| \cdot \|) $, we define the Birkhoff-James orthogonality set of $ (X, \| \cdot \|) $ by $ \mathcal{P}_B = \{ (x,y) \in X \times X : x\perp_B y \} .$ Let $  x $ and $ y $ be a pair of non-zero linearly dependent vectors. Then $ x\perp_{(\uptau, \mathcal{F},\mathcal{A}_X)} y $ if and only if there exists a scalar valued continuous function $ f\in \mathcal{F} $ such that $ f $ vanishes on $ \mathcal{A}_X $. We would like to comment here that the orthogonality introduced in the present article may have counter-intuitive properties depending on the choice of $ \uptau $, $ \mathcal{F} $ and $ \mathcal{A}_X $. As for example, let $ \uptau $ be the discrete topology on $ X $ and let $ \mathcal{F} $ be the collection of all non-zero scalar valued functions on $ X $. Then for any $ \rho $-admissible subset $ \mathcal{A}_X $ of $ X $ and for any $ x, y\in X $, it follows that $ x \perp_{(\uptau, \mathcal{F}, \mathcal{A}_X)} y $. We further note that the same conclusion can be reached without any assumption on $ \uptau $ and $ \mathcal{A}_X $ if $ \mathcal{F} $ contains the zero function.\\

However, we will also illustrate in the present work that it is possible to impose certain desirable properties on the orthogonality space $ (\uptau, \mathcal{F}, \mathcal{A}_X) $ by suitably choosing $ \uptau, \mathcal{F}, \mathcal{A}_X $. In particular, we will prove that Birkhoff-James orthogonality in Banach spaces is indeed a special case of the orthogonality defined by us in the present work. We also explore the dependence of the orthogonality $ \perp_{(\uptau, \mathcal{F},\mathcal{A}_X)} $ on both $ \mathcal{F} $ and $ \mathcal{A}_X $ with respect to the norm topology and the weak topology on a Banach space. We completely characterize the right additivity property of $ \perp_{(\uptau, \mathcal{F},\mathcal{A}_X)}, $ under certain additional conditions. In Theorem $ 2.2 $ of \cite{S}, Birkhoff-James orthogonality of linear operators between finite-dimensional real Banach spaces was completely characterized by using the norm attainment set of linear operators. We establish an analogous result for characterizing the orthogonality of continuous linear operators on a finite-dimensional vector space with a topology on it. The applicability of the concerned result is illustrated by the fact that the analogous result in the setting of Banach spaces, as obtained in Theorem $ 2.2 $ of \cite{S}, can be obtained as a corollary to it. Moreover, we illustrate that a topological generalization of the celebrated Bhatia-$\breve{S}$emrl Theorem \cite{BS} can be obtained by following our methodology.

\section{Orthogonality in a vector space with a topology}
We begin with the observation that Birkhoff-James orthogonality in a Banach space is a manifestation of a special case of the orthogonality defined by us in this article. Indeed, this serves as a principal motivation of us behind introducing the generalized concept of orthogonality in the setting of a vector space with a topology.

\begin{theorem}\label{1.P_B equals to P_tau}
Let $ (X, \| \cdot \|) $ be a Banach space and let $ \rho  $ be the projective equivalence relation on $ X\setminus \{ 0 \} $. If $ \mathcal{F} = S_{X^*} $ then there exists a $ \rho $-admissible subset $ \mathcal{A}_X $ of $ X $ such that $ \mathcal{P}_B =  \mathcal{P}_{(\uptau_{\| \cdot \|}, \mathcal{F}, \mathcal{A}_X)} $. In other words, Birkhoff-James orthogonality is equivalent to $ \perp_{(\uptau_{\| \cdot \|}, \mathcal{F}, \mathcal{A}_X)}. $
\end{theorem}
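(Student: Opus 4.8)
The plan is to exhibit a single $\rho$-admissible subset $\mathcal{A}_X$ that lives inside the unit sphere $S_X$, and then to show that for this choice and for $\mathcal{F} = S_{X^*}$ the two conditions of Definition \ref{orthogonality} collapse exactly onto James' characterization of Birkhoff-James orthogonality. First I would construct $\mathcal{A}_X$. Each $\rho$-equivalence class $[u]$ is the punctured line $\{ tu : t \in \mathbb{R}\setminus\{0\} \}$, which meets $S_X$ in precisely the antipodal pair $\{ u/\|u\|, -u/\|u\| \}$. Invoking the same choice principle already used to justify Definition \ref{ admissible}, I select one point from each such antipodal pair; the resulting set $\mathcal{A}_X \subseteq S_X$ is $\rho$-admissible and satisfies $S_X = \mathcal{A}_X \cup (-\mathcal{A}_X)$, the union being disjoint.

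The key computation is the evaluation of the supremum appearing in condition (i). Fix $f \in \mathcal{F} = S_{X^*}$. Since $|f(z)| = |f(-z)|$ and every $w \in S_X$ equals $\pm z$ for some $z \in \mathcal{A}_X$, I obtain $\sup_{z\in\mathcal{A}_X}|f(z)| = \sup_{w\in S_X}|f(w)| = \|f\| = 1$. Consequently, for $u \in \mathcal{A}_X \subseteq S_X$, condition (i) reduces to $f(u) = 1 = \|f\|\,\|u\|$, that is, $f$ attains its norm at $u$, while condition (ii) is simply $f(v) = 0$.

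With this reduction in hand the theorem follows from James' characterization (Theorem $2.1$ of \cite{Ja}). For $u, v \in \mathcal{A}_X$, the relation $u \perp_{(\uptau_{\|\cdot\|},\mathcal{F},\mathcal{A}_X)} v$ means there is $f \in S_{X^*}$ with $f(u) = \|f\|\,\|u\|$ and $f(v) = 0$, which is exactly the statement $u \perp_B v$; the forward direction uses the normalization $f \mapsto f/\|f\|$ of a norming functional, and the reverse direction is immediate. To pass from representatives to arbitrary non-zero $x, y$, I would use that both relations are defined through the representatives $a_x, a_y$ and that Birkhoff-James orthogonality is homogeneous, so $x \perp_B y \Leftrightarrow a_x \perp_B a_y$; the cases involving the zero vector hold by definition on both sides. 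This yields $\mathcal{P}_B = \mathcal{P}_{(\uptau_{\|\cdot\|},\mathcal{F},\mathcal{A}_X)}$.

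I expect the only genuinely delicate point to be the supremum computation: one must verify that the \emph{signed} requirement $f(u) = \sup_{z\in\mathcal{A}_X}|f(z)|$, rather than $|f(u)| = \sup_{z\in\mathcal{A}_X}|f(z)|$, is the correct transcription of the norming condition, and that the value of this supremum is independent of which representative of each antipodal pair was placed in $\mathcal{A}_X$. A useful consistency check is the linearly dependent case noted after Definition \ref{orthogonality}: no $f \in S_{X^*}$ can vanish on all of $\mathcal{A}_X$, since it would then vanish on $-\mathcal{A}_X$, hence on $S_X$, and so be the zero functional; therefore no pair of non-zero linearly dependent vectors is $\perp_{(\uptau_{\|\cdot\|},\mathcal{F},\mathcal{A}_X)}$-orthogonal, which matches the fact that no such pair is Birkhoff-James orthogonal.
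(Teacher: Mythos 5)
Your proposal is correct and follows essentially the same route as the paper's proof: fix $\mathcal{A}_X \subset S_X$, observe that $\sup_{z\in\mathcal{A}_X}|f(z)| = \|f\| = 1$ for $f \in S_{X^*}$ by antipodal symmetry, and reduce both directions to James' characterization, with the sign issue handled by homogeneity (the paper handles it by replacing $f$ with $-f$ when $a_x = -x/\|x\|$, which is the same device in a different place). If anything, your explicit verification of the supremum computation and of the representative-independence is slightly more careful than the paper's, which asserts these points without comment.
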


\begin{proof}
Without loss of generality, we may and do assume that each element of $ \mathcal{A}_X $ has norm one, i.e., $ \mathcal{A}_X\subset S_X  $. For a non-zero element $ x\in X $, let $ a_x $ be the corresponding element of $ x $ in $ \mathcal{A}_X $. Let $ (x, y) \in \mathcal{P}_B $. If either of $ x, y $ is zero, then trivially $ (x,y)\in \mathcal{P}_{(\uptau_{\|\cdot \|}, \mathcal{F},\mathcal{A}_X)} $. Similarly, if $ (x,y)\in \mathcal{P}_{(\uptau_{\|\cdot \|}, \mathcal{F},\mathcal{A}_X)} $ and either of $ x, y $ is zero then $ (x,y)\in \mathcal{P}_B $. Now, let $ (x, y)\in \mathcal{P}_{(\uptau_{\|\cdot \|}, \mathcal{F},\mathcal{A}_X)} $ such that $ x $ and $ y $ are non-zero. Then there exists $ f\in \mathcal{F} $ such that $ f(a_x) = \underset{ z\in \mathcal{A}_X }{sup}|f(z)| = 1 $ and $ f(\lambda a_y) = 0 $ for all scalars $ \lambda $. Since $ \mathcal{A}_X $ is a proper subset of $ S_X $, it follows that either $ a_x = \frac{x}{\|x\|} $ or, $ a_x = \frac{-x}{\|x\|} $. Since $ f \in \mathcal{F} = S_{X^*} $, it follows from Theorem $ 2.1 $ of \cite{Ja} that $ a_x\perp_B a_y $. Therefore, using the homogeneity property of Birkhoff-James orthogonality, we have that $ (x,y)\in \mathcal{P}_B $. This completes the proof of the fact that $ \mathcal{P}_{(\uptau_{\| \cdot \|}, \mathcal{F},\mathcal{A}_X)} \subseteq \mathcal{P}_B $. Conversely, suppose, $ (x,y)\in \mathcal{P}_B $ for some non-zero elements $ x $ and $ y $. Then again by the homogeneity property of Birkhoff-James orthogonality, we have $ (\frac{x}{\|x\|}, \frac{y}{\|y\|})\in \mathcal{P}_B $. Therefore, there exists $ f\in S_{X^*} $ such that $ f(\frac{x}{\|x\|}) = 1 $ and $ f(\frac{y}{\|y\|}) = 0 .$ It is trivial to see that $ \frac{x}{\|x\|} \neq \frac{\pm y}{\|y\|} $. Consequently, $ x $ and $ y $ correspond to distinct members in $\mathcal{A}_X $. If $ a_x = \frac{x}{\|x\|} $ then there is nothing to prove. If $ a_x = \frac{-x}{\|x\|} $ then we choose $ -f \in \mathcal{F} $ instead of $ f $. Now, $ (-f)(a_x) = \underset{ z\in \mathcal{A}_X }{sup}|f(z)| = 1 $ and $ (-f)(\lambda a_y) = 0 $ for all scalars $ \lambda $. Therefore, we obtain $ x\perp_{(\uptau_{\| \cdot \|}, \mathcal{F}, \mathcal{A}_X)} y $, i.e., $ \mathcal{P}_B\subseteq \mathcal{P}_{(\uptau_{\| \cdot \|}, \mathcal{F},\mathcal{A}_X)} $. Consequently, $ \mathcal{P}_B = \mathcal{P}_{(\uptau_{\| \cdot \|}, \mathcal{F},\mathcal{A}_X)} $. This completes the proof of the theorem.
\end{proof}

Our next result illustrates that $ \perp_{(\uptau_{\| \cdot \|}, \mathcal{F}, \mathcal{A}_X)} $ can be made strictly weaker than Birkhoff-James orthogonality in a Banach space $ X, $ by suitably choosing $ \mathcal{F} $ and $ \mathcal{A}_X. $

\begin{theorem}\label{2.P_tau proper subset P_B}
Let $ (X, \| \cdot \|) $ be a Banach space and let $\rho  $ be the projective equivalence relation on $ X\setminus \{ 0 \} $. If $ \mathcal{F} = S_{X^*} $, then there exists a $ \rho $-admissible subset $ \mathcal{A}_X $ of $ X $ such that $  \mathcal{P}_{(\uptau_{\| \cdot \|}, \mathcal{F}, \mathcal{A}_X)} \subsetneq \mathcal{P}_B $. In particular, Birkhoff-James orthogonality is not equivalent to $ \perp_{(\uptau_{\| \cdot \|}, \mathcal{F}, \mathcal{A}_X)}. $
\end{theorem}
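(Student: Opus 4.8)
The plan is to exploit the freedom in the \emph{norms} of the representatives in $\mathcal{A}_X$ in order to make condition (i) of Definition \ref{orthogonality} \emph{unsatisfiable} for every non-zero pair. Concretely, I would construct a $\rho$-admissible set $\mathcal{A}_X$ with the property that
\[
\sup_{z\in\mathcal{A}_X}|f(z)|=+\infty \qquad\text{for every } f\in S_{X^*}.
\]
Once this is achieved, note that for non-zero $x,y$ the relation $x\perp_{(\uptau_{\|\cdot\|},\mathcal{F},\mathcal{A}_X)}y$ would require some $f\in S_{X^*}$ with $f(a_x)=\sup_{z\in\mathcal{A}_X}|f(z)|=+\infty$, which is impossible since $f(a_x)\le\|a_x\|<\infty$. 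Hence $\mathcal{P}_{(\uptau_{\|\cdot\|},\mathcal{F},\mathcal{A}_X)}$ consists \emph{only} of the trivial pairs $(x,y)$ with $x=0$ or $y=0$, all of which lie in $\mathcal{P}_B$; this already gives $\mathcal{P}_{(\uptau_{\|\cdot\|},\mathcal{F},\mathcal{A}_X)}\subseteq\mathcal{P}_B$. The inclusion holds \emph{vacuously} on the non-trivial part, which is precisely why forcing the supremum to be infinite is the clean choice: a less drastic modification of the norms could render condition (i) satisfiable by a functional that does not norm $a_x$, thereby creating a spurious orthogonality pair and destroying the inclusion. Strictness then follows because $\mathcal{P}_B$ does contain non-trivial pairs: given any $x\neq0$, Hahn--Banach yields $f\in S_{X^*}$ with $f(x)=\|x\|$, and since $\dim X>1$ the hyperplane $\ker f$ contains some $y\neq0$; by Theorem $2.1$ of \cite{Ja} we have $x\perp_B y$, so $(x,y)\in\mathcal{P}_B\setminus\mathcal{P}_{(\uptau_{\|\cdot\|},\mathcal{F},\mathcal{A}_X)}$.

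It remains to carry out the construction, which is the heart of the argument. First I would pass to the projective (ray) space $\mathbb{P}=(X\setminus\{0\})/\rho$. Since $\dim X>1$, the sphere $S_X$ has no isolated points, and the map $[u]\mapsto|f(\hat u)|$ (with $\hat u$ a unit vector in $[u]$) is well defined and continuous on the crowded metrizable space $\mathbb{P}$, with $\sup_{[u]\in\mathbb{P}}|f(\hat u)|=\|f\|=1$ for every $f\in S_{X^*}$. Using that every crowded metric space is resolvable into countably many dense subsets, I would partition $\mathbb{P}=\bigsqcup_{n\ge1}P_n$, each $P_n$ dense in $\mathbb{P}$. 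For each ray $[u]\in P_n$ I then declare the representative in $\mathcal{A}_X$ to be $a_{[u]}=2n\,\hat u$; this assigns exactly one representative to each ray, so $\mathcal{A}_X$ is genuinely $\rho$-admissible.

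The verification is then short: fix $f\in S_{X^*}$ and $n\ge1$. By density of $P_n$ and continuity, $\sup_{[u]\in P_n}|f(\hat u)|=1$, so there exists $[u]\in P_n$ with $|f(\hat u)|>\tfrac12$, whence $|f(a_{[u]})|=2n\,|f(\hat u)|>n$. Letting $n\to\infty$ gives $\sup_{z\in\mathcal{A}_X}|f(z)|=+\infty$, as required. The main obstacle, and the only non-elementary ingredient, is precisely the existence of this partition into countably many dense (hence norming) subfamilies of rays: for separable $X$ one builds it by hand (scaling a dense sequence of directions by growing radii), while for general, possibly non-separable, $X$ one invokes the $\omega$-resolvability of crowded metric spaces. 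Everything else — the triviality of the resulting orthogonality set and the exhibition of a genuine Birkhoff--James pair — is routine.
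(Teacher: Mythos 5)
Your proposal is correct, but it takes a genuinely different and substantially heavier route than the paper. The paper's proof is a minimal, local perturbation: fix one nontrivial pair $(u,v)\in\mathcal{P}_B$ and choose $\mathcal{A}_X$ so that the representative $a_u$ has norm $\tfrac{1}{2}$ while every other representative lies on $S_X$. Then $|g(a_u)|\le\tfrac{1}{2}<1=\sup_{z\in\mathcal{A}_X}|g(z)|$ for every $g\in S_{X^*}$, so $(u,v)\notin\mathcal{P}_{(\uptau_{\|\cdot\|},\mathcal{F},\mathcal{A}_X)}$; and the inclusion $\mathcal{P}_{(\uptau_{\|\cdot\|},\mathcal{F},\mathcal{A}_X)}\subseteq\mathcal{P}_B$ persists because condition (i), whose right-hand side is still $1$, can only be met at a norm-one representative normed by $f$, after which Theorem $2.1$ of \cite{Ja} applies exactly as in Theorem \ref{1.P_B equals to P_tau}. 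This also settles the worry that motivates your ``drastic'' choice: a mild rescaling does not create spurious pairs, precisely because condition (i) forces $f(a_x)=1=\|f\|\,\|a_x\|$. Your construction instead collapses the relation globally: you partition the ray space (the antipodal quotient of $S_X$, a crowded metrizable space since $\dim X>1$) into countably many dense pieces $P_n$, scale the representatives coming from $P_n$ by $2n$, and conclude that $\sup_{z\in\mathcal{A}_X}|f(z)|=+\infty$ for every $f\in S_{X^*}$, so that $\mathcal{P}_{(\uptau_{\|\cdot\|},\mathcal{F},\mathcal{A}_X)}$ reduces to the trivial pairs. All steps check out: the $\omega$-resolvability you invoke is classical (Hewitt, Ceder; alternatively one can iterate Hewitt's $2$-resolvability, since dense subsets of crowded spaces are crowded, and absorb the leftover set into the first piece, which avoids citing maximal resolvability), the density-plus-continuity argument for the supremum is sound, and strictness via Hahn--Banach and $\dim X>1$ is exactly right. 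The trade-off: the paper's argument is short and completely elementary, and its perturbed orthogonality set remains large (only the pairs $(x,y)$ with $x$ a nonzero multiple of $u$ and $y\neq 0$ are deleted); yours costs a nontrivial topological ingredient but proves a stronger statement, namely that $\mathcal{A}_X$ can be chosen so that the induced orthogonality set is exactly $\{(x,y): x=0 \ \text{or}\ y=0\}$, exhibiting the dependence of $\perp_{(\uptau,\mathcal{F},\mathcal{A}_X)}$ on $\mathcal{A}_X$ in its most extreme form.
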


\begin{proof}
Let us consider $ (u,v)\in \mathcal{P}_B $ for some non-zero $ u, v $. We choose $ \mathcal{A}_X $ in such a way that the element in $ \mathcal{A}_X $ which corresponds to $ u $ has norm $ \frac{1}{2} $ and all other elements of $ \mathcal{A}_X $ has norm $ 1 $.  For a non-zero element $ x\in X $, let $ a_x $ be the corresponding element of $ x $ in $ \mathcal{A}_X $. Since $ \mathcal{F} $ is the family of all norm one continuous linear functionals defined on $ X $, it is easy to see that for any $ g\in \mathcal{F} $, $ | g(a_u) | < \underset{z \in \mathcal{A}_X}{sup}| g(z) | = 1 $. Therefore $ (u,v) \not\in \mathcal{P} _{(\uptau_{\| \cdot \|}, \mathcal{F}, \mathcal{A}_X)} $. Now, let $ (x,y)\in \mathcal{P}_{(\uptau_{\|\cdot \|}, \mathcal{F},\mathcal{A}_X)}. $ If either of $ x, y $ is zero then $ (x,y)\in \mathcal{P}_B $. Let $ (x, y)\in \mathcal{P}_{(\uptau_{\|\cdot \|}, \mathcal{F},\mathcal{A}_X)} $ be such that $ x $ and $ y $ are non-zero elements in $ X $. Then there exists $ f\in \mathcal{F} $ such that $ f(a_x) = \underset{ z\in \mathcal{A}_X }{sup}|f(z)| = 1 $ and $ f(\lambda a_y) = 0 $ for all scalars $ \lambda $. We should note that either $ a_x = \frac{x}{\|x\|} $, or $ a_x = \frac{-x}{\|x\|} $. Since $ f \in \mathcal{F} = S_{X^*} $, it follows from Theorem $ 2.1 $ of \cite{Ja} that $ a_x\perp_B a_y $. Therefore, using the homogeneity property of Birkhoff-James orthogonality, we have that $ (x,y)\in \mathcal{P}_B $. This completes the proof of the fact that $ \mathcal{P}_{(\uptau_{\| \cdot \|}, \mathcal{F},\mathcal{A}_X)} \subset \mathcal{P}_B $. Consequently, it follows that $ \mathcal{P}_{(\uptau_{\| \cdot \|}, \mathcal{F},\mathcal{A}_X)} \subsetneq \mathcal{P}_B $. This completes the proof of the theorem.
\end{proof}

Our next goal is to show that in a Banach space $ X, $ complete description of Birkhoff-James orthogonality in $ X $ is already contained in the weak topology on $ X, $ if we make a natural choice of $ \mathcal{F} $ and $ \mathcal{A}_X. $

\begin{theorem}\label{3.P_B = P_w}
Let $ (X, \| \cdot \|) $ be a Banach space and let $ \uptau_w $ be the weak topology on $ X $. Let $ \rho  $ be the projective equivalence relation on $ X\setminus \{0\} $. Then there exists a family $ \mathcal{F} $ of continuous scalar valued functions and a $ \rho $-admissible subset $ \mathcal{A}_X $ of $ X $ such that $ \mathcal{P}_B =  \mathcal{P}_{(\uptau_{w}, \mathcal{F}, \mathcal{A}_X)} $. In other words, Birkhoff-James orthogonality is equivalent to $ \perp_{(\uptau_w, \mathcal{F}, \mathcal{A}_X)}. $
\end{theorem}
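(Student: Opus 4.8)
The plan is to make essentially the same choices as in Theorem \ref{1.P_B equals to P_tau} and to observe that the only property of the norm topology used there was that every member of $S_{X^*}$ is continuous---a property shared by the weak topology. Concretely, I would set $\mathcal{F} = S_{X^*}$ and choose $\mathcal{A}_X \subseteq S_X$ to be a $\rho$-admissible subset consisting of unit vectors, so that $\mathcal{A}_X$ selects exactly one of the two antipodal unit vectors from each projective class.

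The first step, and the one that justifies working with the weak topology at all, is to verify that $\mathcal{F} = S_{X^*}$ is a legitimate family of scalar valued continuous functions on $(X, \uptau_w)$. This is immediate from the definition of the weak topology: $\uptau_w$ is by construction the coarsest topology on $X$ rendering every $f \in X^*$ continuous, so in particular each $f \in S_{X^*}$ is $\uptau_w$-continuous. Thus the orthogonality space $(\uptau_w, \mathcal{F}, \mathcal{A}_X)$ is well defined, and this is precisely where the choice of the weak topology (rather than some coarser topology) is used.

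Next I would record the elementary but essential fact that for every $f \in S_{X^*}$,
\[
\underset{z\in \mathcal{A}_X}{\sup}\,|f(z)| = \underset{w\in S_X}{\sup}\,|f(w)| = \|f\| = 1.
\]
The first equality holds because $\mathcal{A}_X$ contains, for each $w \in S_X$, exactly one of $w$ and $-w$, while $|f(w)| = |f(-w)|$; hence restricting the supremum to $\mathcal{A}_X$ discards nothing. Consequently, condition (i) of Definition \ref{orthogonality} reads $f(a_x) = \|f\| = 1$, which is exactly the norm attainment condition appearing in James' characterization (Theorem $2.1$ of \cite{Ja}). As an aside, this also shows that no pair of nonzero linearly dependent vectors lies in either orthogonality set, since no $f \in S_{X^*}$ vanishes on all of $\mathcal{A}_X$.

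With these observations in hand, the set equality $\mathcal{P}_B = \mathcal{P}_{(\uptau_w, \mathcal{F}, \mathcal{A}_X)}$ proceeds verbatim as in Theorem \ref{1.P_B equals to P_tau}. For the inclusion $\mathcal{P}_{(\uptau_w, \mathcal{F}, \mathcal{A}_X)} \subseteq \mathcal{P}_B$, a pair of nonzero orthogonal representatives produces $f \in S_{X^*}$ with $f(a_x) = 1 = \|f\|$ and $f(a_y) = 0$, so Theorem $2.1$ of \cite{Ja} yields $a_x \perp_B a_y$, and homogeneity of Birkhoff-James orthogonality gives $(x,y) \in \mathcal{P}_B$. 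For the reverse inclusion, starting from $(x,y) \in \mathcal{P}_B$ I would normalize, invoke James' theorem to obtain the supporting functional, and adjust its sign (replacing $f$ by $-f \in \mathcal{F}$ when $a_x = -x/\|x\|$) to meet condition (i). I expect no genuine obstacle here: the topology enters only through the continuity of $\mathcal{F}$, which was already settled in the first step, so the sole points requiring care are the sign bookkeeping and the reduction of the supremum over $\mathcal{A}_X$ to $\|f\|$.
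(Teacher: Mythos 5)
Your proposal is correct and follows exactly the paper's own route: the paper likewise takes $\mathcal{F} = S_{X^*}$, chooses $\mathcal{A}_X \subset S_X$, and then repeats the argument of Theorem \ref{1.P_B equals to P_tau} word for word. The only difference is that you make explicit the two facts the paper leaves implicit---that every $f \in S_{X^*}$ is $\uptau_w$-continuous by definition of the weak topology, and that $\sup_{z \in \mathcal{A}_X}|f(z)| = \|f\| = 1$ because $\mathcal{A}_X$ meets each antipodal pair of $S_X$---which is a welcome clarification rather than a deviation.
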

 
\begin{proof}
Without loss of generality, we may and do assume that each element of $ \mathcal{A}_X $ has norm one, i.e., $ \mathcal{A}_X\subset S_X  $. Let us choose $ \mathcal{F} = S_{X^*} $. Now, proceeding in exactly the same way as in Theorem \ref{1.P_B equals to P_tau}, we get the desired result. 
\end{proof}


\begin{remark}
It is well-known that the weak topology on a Banach space $ X $ is not metrizable if $ X $ is infinite-dimensional. Therefore, the above theorem explicitly points out the fact that for building a satisfactory concept of orthogonality, it is not essential to work in a metric setting. 
\end{remark}

Let $ (X, \uptau) $ be a topological space such that every one-point set is closed in $ X $. The space $ (X, \uptau) $ is said to be perfectly normal if for each pair $ A, B $ of disjoint closed sets of $ X $ , there exists a continuous function $ f_{A, B}: X\rightarrow [0, 1] $ such that $ f_{A, B}^{-1}(\{ 0 \}) = B $  and $ f_{A,B}^{-1}(\{ 1 \}) = A $. Such a function $ f_{A, B} $ is said to be a strictly separating function for the disjoint closed sets $ A $ and $ B $. Our next result shows that if $ \uptau $ is a perfectly normal topology on $ (X,\| \cdot \|) $, then under a certain additional condition, Birkhoff-James orthogonality in $ X $ is equivalent to $ \perp_{(\uptau, \mathcal{F}, \mathcal{A}_X)}. $

\begin{theorem}\label{4.P_B Subset P_tau PERFECTLY NORMAL}
Let $ (X, \| \cdot \|) $ be a Banach space and let $ \rho  $ be the projective equivalence relation on $ X\setminus \{ 0 \} $. Let $\uptau $ be any topology on $ X $ such that the following conditions are satisfied:\\ 

(i) $ (X,\uptau) $ is a perfectly normal topological vector space.\\

(ii) Each strictly separating function on $ (X,\uptau) $ is $ \uptau_{\| \cdot \|} $-continuous.\\

\noindent Then for any $ \rho $-admissible subset $ \mathcal{A}_X $ of $ X $, there exists a family $ \mathcal{F} $ of strictly separating functions on $ (X, \uptau) $ such that  $ \mathcal{P}_B =  \mathcal{P}_{(\uptau, \mathcal{F}, \mathcal{A}_X)} $. In other words, Birkhoff-James orthogonality is equivalent to $ \perp_{(\uptau, \mathcal{F}, \mathcal{A}_X)}. $
\end{theorem}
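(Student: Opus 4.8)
The plan is to let strictly separating functions play, in the purely topological setting, the exact role that the supporting functional of James's theorem played in Theorem \ref{1.P_B equals to P_tau}: there a functional $g$ with $g(a_x)=\|a_x\|$ and $g(a_y)=0$ witnessed orthogonality, and here a strictly separating function will do the same job. The guiding observation is that a strictly separating function takes values in $[0,1]$ and attains the value $1$ exactly on its prescribed ``top'' set, so if I manufacture one equal to $1$ at $a_x$ and identically $0$ on the whole line through $a_y$, then both requirements of Definition \ref{orthogonality} — that $f$ be norming at $a_x$ and that $f$ annihilate $a_y$ — hold automatically. Accordingly I would define $\mathcal{F}$ to be indexed precisely by the Birkhoff-James orthogonal pairs drawn from $\mathcal{A}_X$, so that the family encodes the relation $\perp_B$ and nothing coarser.

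Before constructing $\mathcal{F}$ I would record two closedness facts. Since $(X,\uptau)$ is a perfectly normal topological vector space, every one-point set is $\uptau$-closed, so $(X,\uptau)$ is a $T_1$, hence Hausdorff, topological vector space; consequently every finite-dimensional subspace of $(X,\uptau)$ is $\uptau$-closed. Moreover, any two distinct elements of $\mathcal{A}_X$ lie in distinct $\rho$-classes and are therefore linearly independent, while no nonzero vector is $\perp_B$ to a scalar multiple of itself; hence every pair $u,v\in\mathcal{A}_X$ with $u\perp_B v$ is linearly independent, so $\{u\}$ and $\operatorname{span}\{v\}$ are disjoint $\uptau$-closed sets. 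Perfect normality then furnishes a strictly separating function $f_{u,v}\colon X\to[0,1]$ with $f_{u,v}^{-1}(\{1\})=\{u\}$ and $f_{u,v}^{-1}(\{0\})=\operatorname{span}\{v\}$. I would set $\mathcal{F}=\{\,f_{u,v}: u,v\in\mathcal{A}_X,\ u\perp_B v\,\}$, and note that by hypothesis (ii) each $f_{u,v}$ is in addition $\uptau_{\|\cdot\|}$-continuous.

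For $\mathcal{P}_B\subseteq\mathcal{P}_{(\uptau,\mathcal{F},\mathcal{A}_X)}$ I would take a nonzero pair $x\perp_B y$; by homogeneity $a_x\perp_B a_y$, so $f_{a_x,a_y}\in\mathcal{F}$. Since $f_{a_x,a_y}(a_x)=1$, $f_{a_x,a_y}\le 1$, and $a_x\in\mathcal{A}_X$, we get $f_{a_x,a_y}(a_x)=\sup_{z\in\mathcal{A}_X}|f_{a_x,a_y}(z)|$, while $f_{a_x,a_y}$ vanishes on $\operatorname{span}\{a_y\}$; both defining conditions of Definition \ref{orthogonality} hold, so $(x,y)\in\mathcal{P}_{(\uptau,\mathcal{F},\mathcal{A}_X)}$. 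For the reverse inclusion, suppose $(x,y)\in\mathcal{P}_{(\uptau,\mathcal{F},\mathcal{A}_X)}$ with $x,y$ nonzero, witnessed by some $f_{u,v}\in\mathcal{F}$. Because $u\in\mathcal{A}_X$ and $f_{u,v}(u)=1$, the supremum $\sup_{z\in\mathcal{A}_X}|f_{u,v}(z)|$ equals $1$, so $f_{u,v}(a_x)=1$ forces $a_x\in f_{u,v}^{-1}(\{1\})=\{u\}$, i.e. $a_x=u$; likewise $f_{u,v}(a_y)=0$ forces $a_y\in\operatorname{span}\{v\}$, and since $a_y,v\in\mathcal{A}_X$ represent the same class, $a_y=v$. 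As $u\perp_B v$, homogeneity gives $x\perp_B y$. The zero cases are immediate, and a nonzero linearly dependent pair occurs on neither side: such vectors are never $\perp_B$, and no member of $\mathcal{F}$ vanishes on all of $\mathcal{A}_X$ (each $f_{u,v}$ equals $1$ at $u\in\mathcal{A}_X$), so by the remark preceding the theorem they are not $\perp_{(\uptau,\mathcal{F},\mathcal{A}_X)}$ either.

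The main obstacle is the calibration required for the inclusion $\mathcal{P}_{(\uptau,\mathcal{F},\mathcal{A}_X)}\subseteq\mathcal{P}_B$: a strictly separating function of the above shape exists for \emph{every} linearly independent pair, so indexing $\mathcal{F}$ by all such pairs would make $\perp_{(\uptau,\mathcal{F},\mathcal{A}_X)}$ far coarser than $\perp_B$. The crux that saves the argument is that $f_{u,v}$ attains $1$ on $\mathcal{A}_X$ only at $u$ and vanishes on $\mathcal{A}_X$ only along $\operatorname{span}\{v\}$, which pins the witnessing indices down to $(a_x,a_y)$ and lets homogeneity close the loop. The genuinely topological input is the $\uptau$-closedness of the line $\operatorname{span}\{v\}$, for which the Hausdorff topological vector space structure of hypothesis (i) is essential; hypothesis (ii) serves to keep the whole family $\mathcal{F}$ within the norm-continuous functions, rendering the construction compatible with the norm topology underlying $\perp_B$.
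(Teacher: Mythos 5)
Your proposal is correct and follows essentially the same route as the paper: both build $\mathcal{F}$ out of strictly separating functions that pin the value $1$ exactly at $a_x$ and the value $0$ exactly on the line through $a_y$, indexed precisely by the Birkhoff--James orthogonal pairs, and both close the argument via homogeneity. The only difference is expository: your reverse inclusion spells out in detail (via the exact-preimage property forcing $a_x=u$ and $a_y=v$) what the paper compresses into the single remark that $\mathcal{F}$ was restricted to pairs in $\mathcal{P}_B$.
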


\begin{proof}
For a non-zero element $ x\in X $, let $ a_x $ be the corresponding element of $ x $ in $ \mathcal{A}_X $. For any $ y\in X $ , let $ L_y = \{ \lambda y : \lambda \in \mathbb{R} \} $. It is easy to see that $ L_y $ is closed in both $ ( X, \uptau) $ and $ (X,\uptau_{\|\cdot\|}) $. It should be observed that for any $ (x,y)\in \mathcal{P}_B $ such that $ x $ and $ y $ are non-zero, $ a_x \not\in L_y $. In particular, $ \{ a_x \} $ and $ L_y $ form a disjoint pair of closed sets in $ ( X, \uptau) $. Therefore, there are $ \uptau $-continuous functions which strictly separate $ \{ a_x \} $ and $ L_y $. Let $ \mathcal{F} $ be the collection of all strictly separating functions $ g_{a_x, Ly} $ such that $ (x,y)\in \mathcal{P}_B $ and $ x, y $ are non-zero. In other words,

\[ \mathcal{F} = \{ g_{a_x,L_y} : X \rightarrow [0, 1] : g^{-1}_{a_x,L_y}(\{ 0 \}) = L_y  ,  g^{-1}_{a_x,L_y}(\{ 1 \}) = \{ a_x \},  (x,y)\in \mathcal{P}_B, x,y \neq 0 \}.\]

\noindent By the hypothesis of the theorem, it follows that each member of $ \mathcal{F} $ is also $ \uptau_{\| \cdot \|} $-continuous. Let $ (x,y)\in \mathcal{P}_B $. If either of $ x, y $ is zero then trivially $ (x,y) \in \mathcal{P}_{(\uptau, \mathcal{F}, \mathcal{A}_X)} $. Similarly, if $ (x,y)\in \mathcal{P}_{(\uptau, \mathcal{F},\mathcal{A}_X)} $ and either of $ x,y $ is zero then $ (x,y)\in \mathcal{P}_B $. Let $ (x, y )\in \mathcal{P}_B $ be such that $ x $ and $ y $ are non-zero. Now, $ g_{a_x,L_y} \in \mathcal{F} $. Also, $ g_{a_x,L_y}(a_x) = \underset{z\in \mathcal{A}_X }{sup} |g_{a_x,L_y}(z)| = 1 $ and $ g_{a_x,L_y}(L_y) = 0 $, i.e., $ g_{a_x,L_y}(\lambda y) = 0 $ for all scalars $ \lambda $. Therefore, $ x\perp_{(\uptau, \mathcal{F}, \mathcal{A}_X)} y $. Consequently, $ \mathcal{P}_B \subseteq \mathcal{P}_{(\uptau, \mathcal{F}, \mathcal{A}_X)} $. Now, the equality follows directly from the fact that in the construction of $ \mathcal{F} $, we have restricted ourselves to $ (x, y)\in \mathcal{P}_B $ such that $ x, y \neq 0 $. This completes the proof of the theorem.
\end{proof}

\begin{remark}
We would like to note that the above theorem is another evidence to the fact that the concept of orthogonality should be treated as a topological one, instead of restricting the scope of its study to Banach spaces. We further observe that it is well-known fact that the weak topology on a Banach space $ X $ is not necessarily normal \cite{C}. Therefore, Theorem \ref{3.P_B = P_w} does not follow from the above theorem.   
\end{remark}

If $ (X, \| \cdot \|) $ is a Banach space then it is a topological vector space with respect to the norm topology $ \uptau_{\| \cdot \|} $. Moreover, it is easy to see that $ (X,\uptau_{\| \cdot \|}) $ is a perfectly normal topological vector space. Therefore, as an immediate consequence of above theorem, we have the following corollary.

\begin{cor}\label{5.P_B Subset P_tau NORMAL}
Let $ (X, \| \cdot \|) $ be a Banach space and let $ \rho  $ be the projective equivalence relation on $ X\setminus \{ 0 \} $. Then for any $ \rho $-admissible subset $ \mathcal{A}_X $ of $ X $, there exists a family $ \mathcal{F} $ of strictly separating functions on $ (X, \uptau_{\| \cdot \|}) $ such that $ \mathcal{P}_B =  \mathcal{P}_{(\uptau_{\| \cdot \|}, \mathcal{F}, \mathcal{A}_X)} $. 
\end{cor}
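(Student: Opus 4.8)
The plan is to derive Corollary \ref{5.P_B Subset P_tau NORMAL} as an immediate application of Theorem \ref{4.P_B Subset P_tau PERFECTLY NORMAL} by verifying that the norm topology $\uptau_{\|\cdot\|}$ satisfies the two hypotheses of that theorem, namely that $(X,\uptau_{\|\cdot\|})$ is a perfectly normal topological vector space and that every strictly separating function on it is $\uptau_{\|\cdot\|}$-continuous. Once these are established, Theorem \ref{4.P_B Subset P_tau PERFECTLY NORMAL} applies with $\uptau = \uptau_{\|\cdot\|}$ and yields, for any $\rho$-admissible subset $\mathcal{A}_X$, a family $\mathcal{F}$ of strictly separating functions such that $\mathcal{P}_B = \mathcal{P}_{(\uptau_{\|\cdot\|}, \mathcal{F}, \mathcal{A}_X)}$, which is exactly the assertion of the corollary.

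First I would recall that a Banach space $(X,\|\cdot\|)$ is a topological vector space under $\uptau_{\|\cdot\|}$, since the norm makes addition and scalar multiplication continuous and renders every singleton closed; this is standard. The substantive point is perfect normality of $(X,\uptau_{\|\cdot\|})$. Since $X$ is a metric space with metric $d(x,y)=\|x-y\|$, I would invoke the well-known fact that every metric space is perfectly normal: given disjoint closed sets $A,B$, the function
\[
f_{A,B}(x) = \frac{d(x,B)}{d(x,A)+d(x,B)}
\]
is continuous, takes values in $[0,1]$, and satisfies $f_{A,B}^{-1}(\{0\}) = B$ and $f_{A,B}^{-1}(\{1\}) = A$ (using that $A,B$ are closed, so $d(x,A)=0$ iff $x\in A$, and that $A\cap B=\emptyset$ keeps the denominator positive). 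This furnishes the required strictly separating function and verifies condition (i).

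Finally, condition (ii) holds trivially in this case: since $\uptau = \uptau_{\|\cdot\|}$, every strictly separating function on $(X,\uptau_{\|\cdot\|})$ is by definition $\uptau_{\|\cdot\|}$-continuous, so there is nothing to prove. With both hypotheses verified, I would conclude by direct appeal to Theorem \ref{4.P_B Subset P_tau PERFECTLY NORMAL}. I do not anticipate a genuine obstacle here, as the corollary is purely a specialization; the only care needed is to confirm that the metric-space perfect-normality argument is legitimately available and that condition (ii) collapses to a tautology when $\uptau$ coincides with the norm topology. The slightly delicate point, if any, is simply recording that the explicit separating function above indeed achieves the values $0$ and $1$ exactly on $B$ and $A$ respectively, rather than merely separating them.
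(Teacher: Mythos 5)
Your proposal is correct and follows exactly the paper's route: the corollary is obtained by specializing Theorem \ref{4.P_B Subset P_tau PERFECTLY NORMAL} to $\uptau = \uptau_{\|\cdot\|}$, where condition (ii) is tautological and condition (i) holds because a metric (hence perfectly normal) topological vector space results from the norm. The only difference is that the paper asserts perfect normality as ``easy to see,'' whereas you supply the standard witness $f_{A,B}(x) = d(x,B)/(d(x,A)+d(x,B))$, which is a welcome but inessential elaboration.
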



In the next theorem, under an additional condition, we completely characterize the right additivity property of the orthogonality defined by us in the topological setting.

\begin{theorem}\label{6.right additive}
Let $ X $ be a vector space and let $ \uptau $ be a topology on $ X $. Let $ \mathcal{A}_X $ be any $ \rho $-admissible subset of $ X $. Suppose, $ \mathcal{F} $ is a family of non-zero scalar valued continuous linear functionals defined on $ X $ such that for any $ f, g\in \mathcal{F} $, $ f\neq \lambda g $ for any scalar $ \lambda $ with $ \left| \lambda \right| \neq 1 $. Then $ \perp_{(\uptau, \mathcal{F}, \mathcal{A}_X)} $ is right additive if and only if for each element $ a\in \mathcal{A}_X $ there exists at most one $ f\in \mathcal{F} $ such that $ f(a) = \underset{z\in \mathcal{A}_X}{sup}|f(z)| $.
\end{theorem}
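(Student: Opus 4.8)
The plan is to translate the orthogonality $\perp_{(\uptau,\mathcal{F},\mathcal{A}_X)}$ (which I abbreviate $\perp$) into a statement about kernels of linear functionals, and then exploit the elementary fact that a union of hyperplanes is a subspace precisely when there is only one of them. For a fixed $a\in\mathcal{A}_X$, I would write $\mathcal{F}_a=\{f\in\mathcal{F}: f(a)=\sup_{z\in\mathcal{A}_X}|f(z)|\}$ for the functionals attaining their supremum at $a$; the condition in the theorem is exactly that $|\mathcal{F}_a|\le 1$ for every $a$. Since each $f\in\mathcal{F}$ is linear and homogeneity reduces everything to representatives, for nonzero $w$ I would record the basic equivalence $a\perp w \iff w\in\bigcup_{f\in\mathcal{F}_a}\ker f$ (using $f(a_w)=0 \iff f(w)=0$). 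The single most useful observation is that every $f\in\mathcal{F}_a$ satisfies $f(a)>0$: indeed $f(a)=\sup_{z\in\mathcal{A}_X}|f(z)|$, and were this zero then $f$ would vanish on $\mathcal{A}_X$ and hence, since every vector is a scalar multiple of some element of $\mathcal{A}_X$, on all of $X$, contradicting $f\neq 0$. In particular $a\notin\ker f$ for every $f\in\mathcal{F}_a$, so $a\not\perp a$.

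For the sufficiency direction I would argue directly. Given $x,y,z$ with $x\perp y$ and $x\perp z$, the case $x=0$ is immediate, so put $a=a_x$. If $\mathcal{F}_a=\varnothing$ then no nonzero vector is orthogonal to $a$, forcing $y=z=0$ and hence $x\perp(y+z)$ trivially. If $\mathcal{F}_a=\{h\}$, then by the equivalence above (the zero cases being covered by definition) $x\perp w$ holds iff $w\in\ker h$; since $\ker h$ is a linear subspace it is closed under addition, so $y,z\in\ker h$ gives $y+z\in\ker h$, i.e. $x\perp(y+z)$. This establishes right additivity under the stated condition.

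For the necessity direction I would prove the contrapositive: assuming some $a\in\mathcal{A}_X$ admits two distinct $f,g\in\mathcal{F}_a$, I would build a failure of right additivity. First I would show $f,g$ are linearly independent. If not, then $f=\lambda g$ with $\lambda\neq 0$; the standing hypothesis on $\mathcal{F}$ forces $|\lambda|=1$, and $\lambda=1$ is excluded since $f\neq g$, while $\lambda=-1$ gives $f=-g$, whence $f(a)=-g(a)$; but $f(a)=\sup_{z\in\mathcal{A}_X}|f(z)|=\sup_{z\in\mathcal{A}_X}|g(z)|=g(a)$, so $f(a)=0$, contradicting the positivity noted above. With $f,g$ independent, the map $w\mapsto(f(w),g(w))$ is onto $\mathbb{R}^2$, so I can solve $f(y)=0,\ g(y)=g(a)$ and set $z=a-y$; then $y\in\ker f$, $z\in\ker g$, both are nonzero (since $g(y)=g(a)\neq 0$ and $f(z)=f(a)\neq 0$), and $y+z=a$. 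Consequently $a\perp y$ via $f$ and $a\perp z$ via $g$, while $a\not\perp a=y+z$ by the key observation, so right additivity fails for the triple $(a,y,z)$.

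The point demanding the most care — and where the hypotheses do the real work — is guaranteeing that the sum $y+z$ in the counterexample is orthogonal to $a$ against no member of $\mathcal{F}_a$ at all, not merely against $f$ and $g$; choosing $y+z$ to equal $a$ itself sidesteps this entirely, because $a$ lies outside every kernel in $\mathcal{F}_a$. The assumption that no two elements of $\mathcal{F}$ differ by a scalar of modulus $\neq 1$ is likewise essential: without it one could take $\mathcal{F}_a=\{f,2f\}$ with a single common kernel, so that $\perp$ would remain right additive even though $|\mathcal{F}_a|=2$. The hypothesis is precisely what promotes ``two distinct functionals in $\mathcal{F}_a$'' to ``two independent kernels,'' which is the mechanism that destroys additivity.
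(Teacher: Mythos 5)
Your proof is correct and follows essentially the same route as the paper's: sufficiency by forcing the two witnessing functionals at $a_x$ to coincide and using linearity of the common kernel, and necessity by writing $a$ as a sum of vectors from $\ker f$ and $\ker g$ (possible once the two distinct functionals are shown non-proportional), invoking right additivity to get $a \perp_{(\uptau,\mathcal{F},\mathcal{A}_X)} a$, and contradicting this via $f(a)=\sup_{z\in\mathcal{A}_X}|f(z)|>0$. The only differences are cosmetic: you argue the contrapositive and decompose only $a$ rather than all of $X$, and you explicitly justify $f\neq -g$ through the positivity observation, a point the paper's proof passes over with ``clearly.''
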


\begin{proof}
Let us first prove the sufficient part of the theorem. For each non-zero element $ x\in X $, let $ a_x $ be the corresponding element of $ x $ in $ \mathcal{A}_X $. By our assumption, for each $ a\in \mathcal{A}_X, $ there exists at most one functional $ f\in \mathcal{F} $ such that $ f(a) = \underset{z\in \mathcal{A}_X}{\sup}|f(z)| $. Let $ u, v\in X $ such that $ x \perp_{(\uptau, \mathcal{F}, \mathcal{A}_X)} u $ and $ x \perp_{(\uptau, \mathcal{F}, \mathcal{A}_X)} v $. If either of $ u, v $ is zero then clearly $ x \perp_{(\uptau, \mathcal{F}, \mathcal{A}_X)} (u + v) $. Let us assume that $ u, v $ are non-zero. Therefore, there exist $ f, g\in \mathcal{F} $ such that,\\

$ f(a_x) = \underset{z\in \mathcal{A}_X}{\sup}|f(z)| $ and  $ f(\lambda u) = 0 $ for all scalars $ \lambda $,\\

$ g(a_x) = \underset{z\in \mathcal{A}_X}{\sup}|g(z)| $ and $ g(\sigma v) = 0 $ for all scalars $ \sigma $.\\

It follows from our assumption that $ f = g $. Therefore, $ \sigma v \in ker~f $ for all scalars $ \sigma $. However, this implies that $ f(\mu (u+v)) = 0 $ for all scalars $ \mu $ and $ x \perp_{(\uptau, \mathcal{F}, \mathcal{A}_X)} (u+v) .$ In other words, $ \perp_{(\uptau, \mathcal{F}, \mathcal{A}_X)} $ is right additive. We now prove the necessary part of the theorem. Let $ a\in \mathcal{A}_X $ be arbitrary. If possible, suppose that there exist distinct $ f,g\in \mathcal{F} $ such that $ f(a) = \underset{z\in \mathcal{A}_X}{\sup}|f(z)| $ and $ g(a) = \underset{z\in \mathcal{A}_X}{\sup}|g(z)| $. Clearly, $ f \neq -g $. In addition, it follows from the hypothesis of the theorem that $ f\neq \lambda g $ for any scalar $ \lambda $ with $ \left| \lambda \right| \neq 1 $. In particular, $ ker~f \neq ker~g $. Thus $ X = \{ u + v : u\in ker~f, ~v\in ker~g\} = ker~f + ker~g $. Therefore, for any $ w\in X $, $ w = u+v $ for some $ u\in ker~f $ and $ v\in ker~g $. Since $ f, g $ are linear, it follows that $ f(\lambda u) = 0 $ and $ g(\sigma v ) = 0 $ for all scalars $ \lambda $ and $ \sigma $. Therefore, $ a \perp_{(\uptau, \mathcal{F}, \mathcal{A}_X)} u $ and $ a \perp_{(\uptau, \mathcal{F}, \mathcal{A}_X)} v $. Since $ \perp_{(\uptau, \mathcal{F}, \mathcal{A}_X)} $ is right additive, we have $ a \perp_{(\uptau, \mathcal{F}, \mathcal{A}_X)} (u+v) $. This shows that  $ a \perp_{(\uptau, \mathcal{F}, \mathcal{A}_X)} w $ for all $ w\in X $. In particular, $ a \perp_{(\uptau, \mathcal{F}, \mathcal{A}_X)} a $. Since $ a\in \mathcal{A}_X $, it follows that $ a \neq 0 $. So, there exists $ h\in \mathcal{F} $ such that\\

$ h(a) = \underset{z\in \mathcal{A}_X}{sup}|h(z)| $ and  $ h(\lambda a) = 0 $ for all scalars $ \lambda $.\\

Clearly, this can be true only when $ h(\mathcal{A}_X) = 0 $, or, equivalently, when $ h $ is identically zero on $ X. $ Since $ \mathcal{F} $ does not contain the zero functional, this leads us to a contradiction. This completes the proof of the theorem.
\end{proof}


It was proved in Theorem $ 4.2 $ of \cite{Ja} that Birkhoff-James orthogonality is right additive in a Banach space $ X $ if and only if $ X $ is smooth, i.e., there exists a unique supporting hyperplane to the unit ball of $ X $ at every point of the unit sphere of $ X. $ This characterization of smoothness of a Banach space in terms of the right additivity property of Birkhoff-James orthogonality is particularly useful in identifying the smooth points in the Banach space of bounded linear operators, endowed with the usual operator norm \cite{SPMR}. We next prove that the above characterization of the smoothness of a Banach space can actually be obtained as a corollary to our previous theorem.

\begin{cor}\label{7.Banach right additive}
Let $ (X,{\| \cdot \|}) $ be a Banach space. Then Birkhoff-James orthogonality is right additive in $ X $ if and only if $ (X, \| \cdot \|) $ is smooth. 
\end{cor}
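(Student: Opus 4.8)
The plan is to deduce this directly from Theorem \ref{6.right additive}, using $\mathcal{F} = S_{X^*}$ together with the $\rho$-admissible subset supplied by Theorem \ref{1.P_B equals to P_tau}. First I would fix $\mathcal{A}_X \subset S_X$ exactly as in the proof of Theorem \ref{1.P_B equals to P_tau}, so that $\perp_B$ and $\perp_{(\uptau_{\|\cdot\|}, \mathcal{F}, \mathcal{A}_X)}$ have the same orthogonality set $\mathcal{P}_B = \mathcal{P}_{(\uptau_{\|\cdot\|}, \mathcal{F}, \mathcal{A}_X)}$. Since these are then literally the same relation, $\perp_B$ is right additive if and only if $\perp_{(\uptau_{\|\cdot\|}, \mathcal{F}, \mathcal{A}_X)}$ is right additive, and the problem reduces to applying the characterization of Theorem \ref{6.right additive}.

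Next I would verify that $\mathcal{F} = S_{X^*}$ meets the hypotheses of Theorem \ref{6.right additive}: its members are non-zero continuous linear functionals, and if $f = \lambda g$ with $f, g \in S_{X^*}$, then $1 = \|f\| = |\lambda|\,\|g\| = |\lambda|$, so no scalar multiple with $|\lambda| \neq 1$ can occur. The one computation worth recording is that $\sup_{z \in \mathcal{A}_X} |f(z)| = \|f\| = 1$ for every $f \in S_{X^*}$: because $\mathcal{A}_X$ contains exactly one element from each antipodal pair $\{u,-u\} \subset S_X$ and $|f(u)| = |f(-u)|$, the supremum over $\mathcal{A}_X$ agrees with the supremum over all of $S_X$, which is $\|f\|$.

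With this identification in hand, the criterion of Theorem \ref{6.right additive} reads: $\perp_{(\uptau_{\|\cdot\|}, \mathcal{F}, \mathcal{A}_X)}$ is right additive if and only if for each $a \in \mathcal{A}_X$ there is at most one $f \in S_{X^*}$ with $f(a) = 1 = \|f\|$, i.e.\ at most one norm-one supporting functional at $a$. The final step is to translate this into smoothness. By the Hahn--Banach theorem every point of $S_X$ admits at least one supporting functional, so ``at most one'' is equivalent to ``exactly one.'' Moreover $f$ is the supporting functional at $a$ precisely when $-f$ is the supporting functional at $-a$, and $\mathcal{A}_X$ meets every antipodal pair; hence uniqueness of the supporting functional at every point of $\mathcal{A}_X$ is equivalent to uniqueness at every point of $S_X$, which is exactly the definition of $X$ being smooth. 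Chaining the equivalences yields the corollary.

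I expect the only delicate point to be the passage from the purely formal condition of Theorem \ref{6.right additive} to the geometric notion of smoothness, concretely the two bookkeeping observations that the supremum over $\mathcal{A}_X$ recovers the full dual norm and that smoothness at a point and at its antipode coincide. Everything else is a direct substitution into the already-established Theorems \ref{1.P_B equals to P_tau} and \ref{6.right additive}.
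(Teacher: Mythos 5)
Your proposal is correct and follows essentially the same route as the paper: fix $\mathcal{A}_X \subset S_X$ and $\mathcal{F} = S_{X^*}$, invoke Theorem \ref{1.P_B equals to P_tau} to identify $\perp_B$ with $\perp_{(\uptau_{\|\cdot\|}, \mathcal{F}, \mathcal{A}_X)}$, apply the characterization of Theorem \ref{6.right additive}, and translate its condition into smoothness. The only difference is that you spell out details the paper dismisses as easy or clear (the hypothesis check on $S_{X^*}$, the identity $\sup_{z\in\mathcal{A}_X}|f(z)| = \|f\|$ via antipodal symmetry, and the Hahn--Banach/antipodal bookkeeping in the passage to smoothness), which is a faithful filling-in rather than a different argument.
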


\begin{proof}
Let us consider the topological vector space $ (X, \uptau_{\| \cdot \|}) $. Without loss of generality, we assume that $ \mathcal{A}_X $ is a $ \rho $-admissible subset of $ X $ such that $ \mathcal{A}_X \subset S_{X}. $ Let $ \mathcal{F} = S_{X^*} $. It is easy to see that the criteria of Theorem \ref{6.right additive} is satisfied in this setting. 
We first prove the necessary part of the corollary. Suppose, Birkhoff-James orthogonality is right additive in $ X. $  Now from Theorem \ref{1.P_B equals to P_tau}, we have $ \mathcal{P}_{(\uptau_{\| \cdot \|}, \mathcal{F}, \mathcal{A}_X)} = \mathcal{P}_B $. It follows from the necessary part of Theorem \ref{6.right additive} that for each $ a\in\mathcal{A}_X, $ there exists at most one $ f\in \mathcal{F} $ such that $ f(a) = \underset{z\in \mathcal{A}_X}{sup}|f(z)| = 1 $. However, this is clearly equivalent to the fact that $ (X, \| \cdot \| ) $ is smooth. We now prove the sufficient part of the corollary. Suppose, $ (X, \| \cdot \| ) $ is smooth. Then for each $ u\in S_X ,$ there exists exactly one $ f\in \mathcal{F} $ such that $ f(u) = \| f \| = 1 $. In particular, for each $ a\in \mathcal{A}_X $ there exists exactly one $ f\in \mathcal{F} $ such that $ f(a) = \underset{z\in \mathcal{A}_X}{sup}|f(z)| = 1 $. Now, by the sufficient part of Theorem \ref{6.right additive}, $ \perp_{(\uptau_{\| \cdot \|}, \mathcal{F}, \mathcal{A}_X)} $ is right additive. Once again from Theorem \ref{1.P_B equals to P_tau}  it follows that Birkhoff-James orthogonality is right additive. This completes the proof of the sufficient part of the corollary.
\end{proof} 
\section{Orthogonality of linear operators And a generalization of Bhatia-$\breve{S}$emrl Theorem}

In \cite{BS}, Bhatia and $\breve{S}$emrl studied the Birkhoff-James orthogonality of linear operators on finite-dimensional Hilbert spaces. Let $ H $ be an $ n $-dimensional Hilbert space. Let $ A $ and $ B $ be $ n \times n $ matrices, identified as linear operators, acting on $ H $ in the usual way. A complete characterization of the Birkhoff-James orthogonality of linear operators on finite-dimensional Hilbert spaces was obtained in \cite{BS} by means of the following theorem:

\begin{theorem}[Theorem $ 1.1 $  \cite{BS}]\label{Bhatia}
A matrix $ A $ is orthogonal to $ B $ if and only if there exists a unit vector $ x\in H $ such that $ \| Ax\| = \| A \| $ and $ \langle Ax, Bx \rangle = 0 $.
\end{theorem}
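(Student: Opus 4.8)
The plan is to prove the two implications separately, with the forward (``if'') direction being a one-line estimate and the reverse (``only if'') direction carrying essentially all of the difficulty. For sufficiency, I would suppose a unit vector $x$ satisfies $\|Ax\|=\|A\|$ and $\langle Ax,Bx\rangle=0$, and then for every scalar $\lambda$ expand
\[
\|A+\lambda B\|^2 \ge \|(A+\lambda B)x\|^2 = \|Ax\|^2 + 2\operatorname{Re}\big(\bar\lambda\langle Ax,Bx\rangle\big) + |\lambda|^2\|Bx\|^2 = \|A\|^2 + |\lambda|^2\|Bx\|^2 \ge \|A\|^2,
\]
the cross term vanishing by hypothesis. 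Hence $\|A\|\le\|A+\lambda B\|$ for all $\lambda$, i.e.\ $A$ is Birkhoff-James orthogonal to $B$.

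For necessity the key object is the norm-attainment set $M_A=\{x\in H:\|x\|=1,\ \|Ax\|=\|A\|\}$. Since $\|Ax\|^2=\langle A^*Ax,x\rangle$ and $A^*A$ is positive and self-adjoint with largest eigenvalue $\|A\|^2$, the spectral theorem identifies $M_A$ with the unit sphere of the eigenspace $E=\ker\!\big(A^*A-\|A\|^2 I\big)$; in particular $M_A$ is compact and $\langle Ax,Ay\rangle=\|A\|^2\langle x,y\rangle$ for $x,y\in E$. Writing $W=\{\langle Ax,Bx\rangle:x\in M_A\}\subset\mathbb{C}$, the whole problem reduces to showing $0\in W$.

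I would first extract from orthogonality that $W$ meets every closed half-plane bounded by a line through the origin. Fixing a unimodular scalar $\omega$, the hypothesis forces $\|A+\tfrac1n\omega B\|\ge\|A\|$, so I pick unit vectors $x_n$ attaining these norms; by compactness of the sphere a subsequence converges to some $x^\omega$, and since $\|A+\tfrac1n\omega B\|\to\|A\|$ one checks $\|Ax^\omega\|=\|A\|$, so $x^\omega\in M_A$. Expanding $\|A\|^2\le\|(A+\tfrac1n\omega B)x_n\|^2$ and using $\|Ax_n\|\le\|A\|$ isolates the cross term and, in the limit, gives $\operatorname{Re}\big(\bar\omega\langle Ax^\omega,Bx^\omega\rangle\big)\ge 0$; thus $W$ intersects the half-plane $\{z:\operatorname{Re}(\bar\omega z)\ge 0\}$ for every $\omega$. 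Consequently $W$ is contained in no open half-plane through the origin, so by the separation theorem (using that $W$ is compact as the continuous image of $M_A$) we get $0\in\operatorname{conv}(W)$.

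The decisive remaining step is to upgrade $0\in\operatorname{conv}(W)$ to $0\in W$, and this is exactly where I expect the main obstacle to lie: the half-plane argument alone only locates $0$ in the convex hull, and a merely connected compact $W$ could encircle the origin without containing it. To close the gap I would note that for $x\in E$ one has $\langle Ax,Bx\rangle=\overline{\langle P_E A^*B\,x,\,x\rangle}$, so $W$ is the complex conjugate of the numerical range of the compression of $A^*B$ to $E$; by the Toeplitz-Hausdorff theorem this numerical range is convex, whence $W$ is convex and $\operatorname{conv}(W)=W$, giving $0\in W$ and the required unit vector. (In the real case the same conclusion follows more cheaply, since $W$ is then either a single point, forced to be $0$ by applying the sequence argument to both $+\tfrac1nB$ and $-\tfrac1nB$, or an interval containing both a nonnegative and a nonpositive value.)
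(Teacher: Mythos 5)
Your proof is correct, but it takes a genuinely different route from the paper, which in fact never proves Theorem \ref{Bhatia} at all: the theorem is quoted verbatim from \cite{BS}, and the paper's own path to it (spelled out in its closing Remark) is indirect --- the general topological characterization of Theorem \ref{11.characterization} is specialized to Banach spaces in Corollary \ref{12. operator orthogonality} (recovering Theorem $2.2$ of \cite{S}), and the Hilbert-space statement is then extracted from that result by invoking Theorems $2.1$ and $2.2$ of \cite{SP}. Your argument is instead a direct, self-contained Hilbert-space proof: sufficiency by the standard expansion of $\|(A+\lambda B)x\|^{2}$; necessity by identifying $M_{A}$ with the unit sphere of the top eigenspace $E$ of $A^{*}A$, showing via the limiting argument (which is sound, including the passage $x_{n}\to x^{\omega}\in M_{A}$ and the isolation of the cross term) that $W=\{\langle Ax,Bx\rangle : x\in M_{A}\}$ meets every closed half-plane bounded by a line through the origin, deducing $0\in\operatorname{conv}(W)$ by separation and compactness of $W$, and then --- the decisive step --- upgrading to $0\in W$ via the Toeplitz--Hausdorff theorem, since $W$ is the conjugate of the numerical range of the compression of $A^{*}B$ to $E$; your separate treatment of the real case (connectedness of the sphere of $E$, or the $\pm\omega$ argument when $\dim E=1$) also holds. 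The trade-off between the two routes is instructive: your proof leans on structures available only in Hilbert spaces (the spectral theorem for $A^{*}A$, convexity of the numerical range) and hence cannot generalize, whereas the paper's machinery applies to semi-normed topological vector spaces but a priori yields only the two-vector condition ($Ax\in Tx^{\oplus_{p}}$, $Ay\in Ty^{\ominus_{q}}$) and must appeal to \cite{SP} to merge the two vectors into a single unit vector $x$ with $\langle Ax,Bx\rangle=0$ --- precisely the collapse that your Toeplitz--Hausdorff step accomplishes in one stroke.
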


Later on, in \cite{S}, Sain characterized the Birkhoff-James orthogonality of linear operators between finite-dimensional real Banach spaces by introducing the notion of the positive part of $ x $, denoted by $ x^+ $ and the negative part of $ x $, denoted by $ x^- $, for an element $ x $ in a real Banach space. Let $ X $ be a finite-dimensional real Banach space and let $ x\in X $. For any element $ y \in X $, we say that $ y\in x^+ $, if $ \| x+\lambda y\| \geq \| x \| $ for all $ \lambda \geq 0 $. Accordingly, we say that $ y \in x^- $,  if $ \| x + \lambda y\| \geq \| x \| $ for all $ \lambda \leq 0.$ Let $ T\in \mathbb {L}(X) $, the collection of all bounded linear operators from the Banach space $ X $ to itself. Let $ M_T $ be the norm attainment set of $ T $, i.e., $ M_T = \{ x\in S_X : \| Tx\| = \| T \| \} $. For the convenience of the readers, we quote the aforesaid characterization due to Sain:

\begin{theorem}[Theorem $ 2.2 $ \cite{S}]\label{Debmalya Sain}
Let $ X $ be a finite-dimensional real Banach space. Let $ T, A \in \mathbb{L}(X) $. Then $ T \perp_ B A $ if and only if there exist $ x, y \in M _T $ such that $ Ax \in T x^+ $ and $ Ay \in T y^- $. .
\end{theorem}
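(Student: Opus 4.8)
The plan is to reduce the statement to one-sided derivatives of convex functions and then to exploit the fact that the operator norm on a finite-dimensional space is a maximum, over a compact parameter set, of functions that are affine in the perturbation. Throughout, for a nonzero $w\in X$ I would write $J(w)=\{\phi\in S_{X^*}:\phi(w)=\|w\|\}$ for the set of supporting functionals at $w$, which is nonempty by Hahn--Banach and compact. The two elementary facts I would isolate first are: (a) for fixed $x$ the map $g_x(\mu)=\|Tx+\mu Ax\|$ is convex, so $Ax\in (Tx)^+$ is equivalent to $g_x'(0^+)\ge 0$ and $Ax\in (Tx)^-$ to $g_x'(0^-)\le 0$; and (b) the one-sided derivative formula for the norm, $g_x'(0^+)=\max\{\phi(Ax):\phi\in J(Tx)\}$. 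Combining (a) and (b), for $x\in M_T$ one gets the clean dictionary $Ax\in (Tx)^+\iff \exists\,\phi\in J(Tx)$ with $\phi(Ax)\ge 0$, and symmetrically for $(Tx)^-$ with $\phi(Ax)\le 0$. I would also record that $T\perp_B A\iff \Phi'(0^-)\le 0\le \Phi'(0^+)$, where $\Phi(\lambda)=\|T+\lambda A\|$ is convex.

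The sufficiency direction is a direct estimate I would dispatch immediately. Given $x,y\in M_T$ with $Ax\in (Tx)^+$ and $Ay\in (Ty)^-$, for every $\lambda\ge 0$ one has $\|T+\lambda A\|\ge \|Tx+\lambda Ax\|\ge \|Tx\|=\|T\|$ using $Ax\in(Tx)^+$, while for every $\lambda\le 0$ the vector $y$ gives $\|T+\lambda A\|\ge\|Ty+\lambda Ay\|\ge\|Ty\|=\|T\|$; hence $\|T+\lambda A\|\ge\|T\|$ for all $\lambda$, i.e. $T\perp_B A$.

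For the necessity direction I would first reduce to a single sign. Since $T\perp_B A\iff T\perp_B(-A)$, and since $Ay\in(Ty)^-\iff (-A)y\in(Ty)^+$, it suffices to prove that if $\Phi'(0^+)\ge 0$ then there is $x\in M_T$ with $Ax\in(Tx)^+$ (applying this to $-A$ then supplies the needed $y$). The heart of the matter is the identity
\[
\Phi'(0^+)=\max\Big\{\phi(Ax):x\in M_T,\ \phi\in J(Tx)\Big\}.
\]
I would obtain this by writing $\|T+\lambda A\|=\max_{(x,\phi)\in S_X\times S_{X^*}}\phi\big((T+\lambda A)x\big)$ as a maximum, over a compact set, of functions that are affine in $\lambda$ with $\lambda$-derivative $\phi(Ax)$, and then invoking the envelope/Danskin theorem, whose active set here is exactly $\{(x,\phi):x\in M_T,\ \phi\in J(Tx)\}$. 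Granting the identity, $\Phi'(0^+)\ge 0$ produces $x\in M_T$ and $\phi\in J(Tx)$ with $\phi(Ax)\ge 0$, which by the dictionary of the first paragraph is precisely $Ax\in(Tx)^+$.

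Finally, the main obstacle is the rigorous justification of the derivative identity, i.e. the interchange of the limit $\lambda\to 0^+$ with the two maxima. The safe way to prove it without citing heavy machinery is a compactness argument: choose $\lambda_n\downarrow 0$ and unit vectors $x_n$ with $\|(T+\lambda_n A)x_n\|=\|T+\lambda_n A\|\ge\|T\|$, pass (finite dimension) to a subsequence $x_n\to x\in S_X$, and check $\|Tx\|=\|T\|$ so that $x\in M_T$. The delicate point, and the reason a one-line estimate does not suffice, is that $x_n$ may converge to $x$ only at rate $O(\lambda_n)$, so the perturbation of the base point $Tx_n$ against $Tx$ is of the same order as the gain $\lambda_n\,\phi(Ax)$ one is trying to control. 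Handling this requires either the upper semicontinuity of $x\mapsto\max_{\phi\in J(Tx)}\phi(Ax)$ together with compactness of $M_T$ (to extract a uniform negative gap under the contrapositive assumption that $Ax\notin(Tx)^+$ for every $x\in M_T$), or the Danskin theorem applied as above. I expect this uniform control near $M_T$ to be the only substantive step; everything else is bookkeeping with convex one-variable functions.
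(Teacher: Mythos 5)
Your proof is correct, but it follows a genuinely different route from the paper's. The paper never proves this statement directly: it is quoted from \cite{S} and then re-derived (Corollary \ref{12. operator orthogonality}) as a specialization of the far more general Theorem \ref{11.characterization}, whose necessity part is a hands-on open-cover compactness argument --- assuming the characterization fails, each point of $\mathcal{M}_T$ is surrounded by a neighborhood on which some negative $\lambda$ strictly decreases the relevant semi-norm, each point of $\mathcal{A}\setminus\mathcal{M}_T$ by a neighborhood on which the semi-norms stay below $P(T)$ for all small $|\lambda|$, a finite subcover is extracted, and a single $\lambda_0<0$ is produced with $P(T+\lambda_0 A)<P(T)$, a contradiction; the dictionary between $\oplus_p/\ominus_p$ and semi-norm inequalities is supplied by Hahn--Banach extension arguments (Theorem \ref{9.semi norm orthogonality} and Corollary \ref{10.semi norm equivalent orthogonality}). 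You instead work entirely in the Banach setting with convex analysis: the max formula $g_x'(0^+)=\max\{\phi(Ax):\phi\in J(Tx)\}$, the equivalence $T\perp_B A \iff \Phi'(0^-)\le 0\le \Phi'(0^+)$, and above all Danskin's envelope theorem identifying $\Phi'(0^+)$ with $\max\{\phi(Ax): x\in M_T,\ \phi\in J(Tx)\}$, whose hypotheses you verify correctly (compact parameter set $S_X\times S_{X^*}$, affine dependence on $\lambda$, active set exactly $\{(x,\phi): x\in M_T,\ \phi\in J(Tx)\}$); the reduction to a single sign via $A\mapsto -A$ is also sound. What your route buys is brevity and a clean, memorable identity, at the price of invoking Danskin as a black box and of being tied to the norm case, whereas the paper's longer route is self-contained and valid in the semi-norm/topological generality that is the point of the article. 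Note finally that your fallback ``elementary'' justification --- a uniform negative gap obtained from upper semicontinuity of $x\mapsto\max_{\phi\in J(Tx)}\phi(Ax)$ on the compact set $M_T$ --- is only sketched; written out in full, with the neighborhood control needed for points near but not in $M_T$ (precisely the $O(\lambda_n)$ difficulty you flag), it would essentially reconstruct the paper's open-cover proof.
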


Theorem \ref{Bhatia} as well as Theorem \ref{Debmalya Sain} show that for a linear operator $ T\in \mathbb{L}(X) $ the sets $ T^{\perp} = \{ A\in \mathbb{L}(X) : T\perp_B A \} $ and $ M_T $ share a deep relation. Motivated by this observation, we strive for building an analogous theory in the topological setting. We would like to end the present article by accomplishing the said goal in a special yet sufficiently general case. However, we require some preparations before embarking on such a journey. First we start with a definition that breaks orthogonality into two components and then we will move on to describe the prescribed topologies, the class of continuous scalar valued functions and the $ \rho $-admissible subsets of the spaces $ X $, $ Y $ and $ \mathbb{L}(X,Y) $. 

\begin{definition}\label{x  plus and x  minus}
Let $ X $ be a vector space and $ \uptau $ be a topology on $ X $. Let $ \mathcal{A}_X $ be any $ \rho $-admissible subset of $ X $ and let $ \mathcal{F} $ be a family of continuous scalar valued functions on $ (X,\uptau) $. Let $ x,y $ be any two non-zero elements in $ X $.\\
First suppose that $ x = \mu a_x $ for some $ \mu > 0 $ and for some $ a_x \in \mathcal{A}_X. $ We say that $ y\in x^{\oplus} $ if there exists $ f\in \mathcal{F} $ such that $ f(a_x) = \underset{z\in \mathcal{A}_X}{\sup}|f(z)| $ and $ f(\lambda y) \geq 0 $ for all $ \lambda \geq 0 $.  Similarly, we say that $ y\in x^{\ominus} $ if there exists $ h\in \mathcal{F} $ such that $ h(a_x) = \underset{z\in \mathcal{A}_X}{\sup}|h(z)| $ and $ h(\lambda y) \leq 0 $ for all $ \lambda \geq 0 $.\\
Next, suppose that $ x = \mu a_x $ for some $ \mu < 0 $ and for some $ a_x \in \mathcal{A}_X. $ We say that $ y\in x^{\oplus} $ if there exists $ g\in \mathcal{F} $ such that $ g(a_x) = \underset{z\in \mathcal{A}_X}{\sup}|g(z)| $ and $ g(\lambda y) \leq 0 $ for all $ \lambda \geq 0 $.  Similarly, we say that $ y\in x^{\ominus} $ if there exists $ k\in \mathcal{F} $ such that $ k(a_x) = \underset{z\in \mathcal{A}_X}{\sup}|k(z)| $ and $ k(\lambda y) \geq 0 $ for all $ \lambda \geq 0 $.\\
If either of $ x,y $ is zero, we declare $ y \in x^{\oplus}\cap x^{\ominus} $ and $ x \in y^{\oplus}\cap y^{\ominus} $.
\end{definition}

 Let $ X $ be a finite-dimensional vector space equipped with some topology $ \uptau $. We further assume that $ \uptau $ is such that we can choose a $ \rho $-admissible subset $ \mathcal{A}_X $ of $ X $ with $ \mathcal{A} = ((\mathcal{A}_X) \cup (- \mathcal{A}_X ) ) $ is compact. Let $ \mathcal{F} $ be any family of continuous scalar valued functions defined on $ X $.
 
Let $ Y $ be a vector space and let $ \{p_i\}_{i=1}^m $ be a finite family of non-trivial semi-norms on $ Y $, i.e., there exist $ \{y_i\}_{i=1}^m \subset Y $, such that $ p_i(y_i) \neq 0 $ for each $ 1\leq i \leq m $. We topologize $ Y $ in the following well-known and standard way \cite{W}:

\noindent Associate to each $ p \in \{ p_i\}_{i=1}^m $ and to each positive integer $ n $ the set $ V_{(p,n)} = \{ y\in Y: p(y)< \frac{1}{n} \} $. Let $ \gamma $ be the collection of all finite intersections of the sets $ V_{(p,n)} $. Let $ \Gamma $ be a collection of subsets of $ Y $ such that $ B \in \Gamma $ if and only if $ B $ is a union of the translates of members of $ \gamma $. It is easy to see that $ \Gamma $ is a topology on $ Y $ and each $ p\in \{ p_i \}_{i=1}^m $ is continuous under $ \Gamma $. Moreover, vector addition and scalar multiplication are also continuous under $ \Gamma $. We refer the readers to Chapter 1 of \cite{W} for more information in this regard. Let $ \mathcal{A}_Y $ be a $ \rho $-admissible subset in $ Y $ such that for any $ b\in \mathcal{B} = ((\mathcal{A}_Y) \cup (- \mathcal{A}_Y ) ) $, $ \max \{ p_i(b)\}_{i=1}^m\in \{ 0, 1 \} $. We note that such a choice of $ \mathcal{B} $ is always possible. For a given element $ b\in \mathcal{B} $, we say $ p\in \{ p_i\}_{i=1}^m $ is optimal for $ b $ if $ p(b) = 1 $. Let $ W_b = \{ p\in \{ p_i\}_{i=1}^m : p(b)= 1\} $. It is immediate that for each $ b\in \mathcal{B} $ such that $ \max \{ p_i(b)\}_{i=1}^m = 1 $, there exists at least one optimal semi-norm for $ b $, i.e., $ W_b\neq \emptyset $. Let us define $ \mathcal{S}_{Y} = \{ b\in \mathcal{B} : p(b) = 1, p\in \{ p_i\}_{i=1}^m \} $. Clearly, $ \mathcal{S}_{Y} $ is non-empty. Let $ b\in \mathcal{S}_Y $. We claim that for any $ p\in W_b $, there exists a linear functional $ f_{(p,b)} : Y \longrightarrow \mathbb{R} $ such that $ f_{(p,b)}(b) = p(b)= 1 $ and $ |f_{(p,b)}(z)| \leq | p(z) | $ for all $ z\in Y $. In other words, our claim is that there exists a $ p $-dominated linear functional $ f_{(p,b)} : Y \longrightarrow \mathbb{R} $ such that $ f_{(p,b)}(b) = p(b) = 1 $. Let us fix some $ p\in W_b $. Let us define $ f: span\{ b \} \longrightarrow \mathbb{R} $ by $ f(\mu b ) = \mu p(b) $. Clearly, $ f $ is linear and $ f $ is dominated by $ p $ in $ span \{ b \}. $ It is now easy to deduce that $ f $ is continuous. Therefore, $ f $ possesses a continuous linear extension, say, $ f_{(p,b)} : Y \longrightarrow \mathbb{R} $ such that $ | f_{(p,b)}(z)| \leq p(z) $ for all $ z\in Y $ and $ f_{(p,b)}(b) = 1 $. Let $ \mathcal{S}_{Y^*} $ be the collection of all such linear functionals. In other words,
\[ \mathcal{S}_{Y^*} = \{ f_{(p,b)}: |f_{(p,b)}(z)| \leq p(z)~ \forall z\in Y,~ f_{(p,b)}(b) = 1,~ b\in \mathcal{S}_Y,~  p\in W_b  \} .\]

\noindent Now, we define a semi-norm $ P $ on $ \mathbb{L}(X,Y) $, the collection of all continuous linear operators from $ X $ to $ Y $, by
\[  P(T) = \underset{i}{\max}\{ \underset{a\in \mathcal{A}}{\max}~ p_i(T(a))\}. \] 

\noindent We call $ P $ as the semi-norm induced by the family $ \{p_i\}_{i=1}^m $. We should note that the semi-norm $ P $ is a nontrivial semi-norm on $ \mathbb{L}(X,Y) $. We topologize $ \mathbb{L}(X,Y) $ by the singleton family of semi-norm $ \{ P \} $ in the same way as we have topologized $ Y $ by the family of semi-norms $ \{ p_i \}_{i=1}^m $. We denote the vector space $ \mathbb{L}(X, Y) $, topologized in this way, as $ (\mathbb{L}(X, Y), P) $. Since $ \mathcal{A} $ is compact in $ X $, by continuity of $ T $, $ T(\mathcal{A}) $ is compact in $ Y $. Also, by continuity of each $ p\in \{p_i\}_{i=1}^m $, $ p(T(\mathcal{A})) $ is compact in $ \mathbb{R} $. Moreover, for each $ T \in \mathbb{L}(X, Y) $ and for each $ p\in \{p_i\}_{i=1}^m $, $ p $ attains supremum on $ T(\mathcal{A}) $. 
For each $ T\in \mathbb{L}(X, Y), $ we define
\[ \mathcal{M}_T = \{ a\in \mathcal{A} : P(T) = p(T(a)), ~\textit{for ~ some}~p\in \{p_i\}_{i=1}^m \}. \]

We also require the following definition to serve our purpose. Let
\[ \mathcal{P}_T = \{ p\in \{p_i\}_{i=1}^m : P(T) = p(T(a)), ~ \textit{for ~ some} ~ a \in \mathcal{A} \}. \]

\noindent Clearly, for each $ T\in \mathbb{L}(X, Y) $, $ \mathcal{M}_T \neq \emptyset $ and $ \mathcal{P}_T \neq \emptyset $. Without loss of generality, we may and do choose a $ \rho $-admissible subset $ \mathbb{A}_{\mathbb{L}(X, Y)} $ in $ \mathbb{L}(X, Y) $ such that for each $ T\in  \mathbb{A} = ((\mathbb{A}_{\mathbb{L}(X, Y)}) \cup (-\mathbb{A}_{\mathbb{L}(X, Y)})) $, $ P(T) \in \{ 0, 1 \} $. Let $ \mathcal{S}_{\mathbb{L}(X, Y)} = \{ T\in \mathbb{A} : P(T) = 1 \} $. Now, for any $ T \in \mathcal{S}_{\mathbb{L}(X, Y)} $, $ W_T = \{ P \} $. As argued before, for each $ T\in  \mathcal{S}_{\mathbb{L}(X, Y)} $, there exists a $ P $-dominated linear functional $ F_T $ such that $ F_T(T) = P(T) = 1 $. Let $ \mathcal{S}_{\mathbb{L}(X, Y)^*} $ be the collection of all such functionals. In other words,\\
\noindent \[ \mathcal{S}_{\mathbb{L}(X,Y)^*} = \{ F_T: |F_T(A)| \leq P(A)~ \forall A\in \mathbb{L}(X, Y),~F_T(T) = 1,~ T\in \mathcal{S}_{\mathbb{L}(X, Y)}\} .\]

We next introduce another definition which is intimately related to Definition \ref{x  plus and x  minus}. The only reason behind introducing such definition is to make our further treatment look more convenient.

\begin{definition}\label{oplus p ominus p}
Let $ (Y,\Gamma) $ be a vector space topologized by a finite family of semi-norms $ \{p_i\}_{i=1}^m $. Let $ x\in Y $ be non-zero. Let $ b_x $ be the corresponding element of $ x $ in $ \mathcal{A}_Y $. Let $ p\in W_{b_x} $. Let $ y\in Y $. \\
First suppose that $ x = \mu b_x $ for some positive scalar $ \mu $. We say that $ y\in x^{\oplus_p} $, if there exists a $ p $-dominated linear functional $ f_{(p,b_x)}\in \mathcal{S}_{Y^*} $ such that $ f_{(p,b_x)}(b_x) = \underset{z\in \mathcal{A}_Y}{\sup}|f_{(p,b_x)}(z)| $ and $ f_{(p,b_x)}(y) \geq 0 $.  Similarly, we say that $ y\in x^{\ominus_p} $, if there exists a $ p $-dominated linear functional $ h_{(p,b_x)}\in \mathcal{S}_{Y^*} $ such that $ h_{(p,b_x)}(b_x) = \underset{z\in \mathcal{A}_X}{\sup}|h_{(p,b_x)}(z)| $ and $ h_{(p,b_x)}(y) \leq 0 $.\\
Next, suppose that $ x = \mu b_x $ for some negative scalar $ \mu $. We say that $ y\in x^{\oplus_p} $, if there exists a $ p $-dominated linear functional $ g_{(p,b_x)}\in \mathcal{S}_{Y^*} $ such that $ g_{(p,b_x)}(b_x) = \underset{z\in \mathcal{A}_X}{\sup}|g_{(p,b_x)}(z)| $ and $ g_{(p,b_x)}(y) \leq 0 $. Similarly, we say that $ y\in x^{\ominus_p} $, if there exists a $ p $-dominated linear functional $ k_{(p,b_x)}\in \mathcal{S}_{Y*} $ such that $ k_{(p,b_x)}(b_x) = \underset{z\in \mathcal{A}_X}{\sup}|k_{(p,b_x)}(z)| $ and $ k_{(p,b_x)}(y) \geq 0 .$\\
If $ x $ is zero, we declare $ y\in x^{\oplus_p}\cap x^{\ominus_p} $ and $ x\in y^{\oplus_p}\cap y^{\ominus_p} $, for all $ y\in Y $ and for all $ p\in \{ p_i\}_{i=1}^m $.
\end{definition}

\begin{theorem}\label{9.semi norm orthogonality}

Let $ (Y,\Gamma) $ be the vector space topologized by a finite family of semi-norms $ \{p_i\}_{i=1}^m $ as above. Let $ \mathcal{A}_Y $ be the $ \rho $-admissible subset of $ Y $ defined as above. Let $ \mathcal{F} = \mathcal{S}_{Y^*} $. Let $ u \in Y\setminus \{0\} $ with $ \underset{i}{\max}\{ p_i(u) \}_{i=1}^m \neq 0 $ and let $ c\in \mathcal{A}_Y $ be such that $ u = \sigma c $, for some $ \sigma \in \mathbb{R} $. Let $ p\in W_c $. Then for any $ v\in Y $, $ v\in u^{\oplus_p} $ $ ( u^{\ominus_p}) $ if and only if $ p(u+\lambda v) \geq p(u) $ for all $ \lambda \geq 0 $ $ ( \leq 0 ) $. 

\end{theorem}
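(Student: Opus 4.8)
The plan is to isolate a single one-directional claim about the base point $c$ and then obtain all four branches of Definition \ref{oplus p ominus p} from it by elementary sign bookkeeping. The core claim is: for $c\in\mathcal{A}_Y$ with $p(c)=1$ and any $w\in Y$, there is a $p$-dominated linear functional $f$ with $f(c)=1$ and $f(w)\geq 0$ if and only if $p(c+\lambda w)\geq 1$ for all $\lambda\geq 0$. Granting this, I would note that $u=\sigma c$ with $p(c)=1$, so dividing by $|\sigma|$ converts each inequality $p(u+\lambda v)\geq p(u)$ (for $\lambda\geq 0$ or $\lambda\leq 0$) into one of the form $p(c\pm tv)\geq 1$ for all $t\geq 0$; comparing with the four cases of Definition \ref{oplus p ominus p} according to the sign of $\sigma$ and the choice of $\oplus_p$ versus $\ominus_p$ shows that each case is exactly the core claim applied with $w=v$ or $w=-v$. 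I would also record that the normalization $f(c)=\underset{z\in\mathcal{A}_Y}{\sup}|f(z)|$ is automatic for any $p$-dominated $f$: here $p$ is one of the $p_i$, so for $z\in\mathcal{A}_Y$ we have $|f(z)|\leq p(z)\leq\underset{i}{\max}\,p_i(z)\in\{0,1\}$, while $f(c)=p(c)=1$; thus the supremum equals $1$ and is attained at $c$, and the constructed $f$ genuinely lies in $\mathcal{S}_{Y^*}$.

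For the forward implication of the core claim, if such an $f$ exists then for $\lambda\geq 0$, $p$-domination gives $p(c+\lambda w)\geq|f(c+\lambda w)|=|1+\lambda f(w)|=1+\lambda f(w)\geq 1$, using $f(w)\geq 0$. This direction is immediate.

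The substance is the converse, which I would prove by a Hahn-Banach extension against the seminorm $p$. Assuming $p(c+\lambda w)\geq 1$ for all $\lambda\geq 0$, set $\psi(\lambda)=p(c+\lambda w)$, a convex function with $\psi(0)=1$, and let $s=\psi'_+(0)$ be its right derivative at $0$. The hypothesis makes every quotient $(\psi(\lambda)-1)/\lambda$ with $\lambda>0$ nonnegative, so $s\geq 0$; the triangle inequality $\psi(\lambda)\leq 1+\lambda p(w)$ bounds these quotients above by $p(w)$, so $s\leq p(w)$; and the standard convexity fact that the right derivative supports the graph gives $\psi(\lambda)\geq 1+s\lambda$ for every $\lambda\in\mathbb{R}$. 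I would then define $f_0$ on $Z=\mathrm{span}\{c,w\}$ by $f_0(c)=1,\ f_0(w)=s$ (consistent even when $c,w$ are dependent, since then $s$ equals the scalar linking them) and check $f_0\leq p$ on $Z$ ray by ray. Writing a point of $Z$ as $ac+bw$ and using positive homogeneity with $\lambda=b/a$: for $a>0$ the inequality is precisely the supporting-line bound $1+\lambda s\leq\psi(\lambda)$; for $a=0$ it reduces to $|s|\leq p(w)$; and for $a<0$ it reduces, after dividing by $a$, to $1+\lambda s\geq-\psi(\lambda)$, which follows from $\psi\geq 0$ and $s\geq 0$ when $\lambda\geq 0$, and from the reverse estimate $\psi(\lambda)\geq-\lambda p(w)-1$ together with $s\leq p(w)$ when $\lambda<0$. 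Having secured $f_0\leq p$ on $Z$, the Hahn-Banach theorem extends $f_0$ to a linear $f$ on $Y$ with $f\leq p$, hence $|f|\leq p$ by symmetry of $p$; thus $f$ is $p$-dominated, $f(c)=1$, $f(w)=s\geq 0$, and by the normalization remark $f\in\mathcal{S}_{Y^*}$ witnesses the membership.

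I expect the genuinely delicate point to be the verification of $f_0\leq p$ in the $a<0$ direction: the hypothesis is one-sided, controlling $\psi$ only for $\lambda\geq 0$, whereas domination must hold in all directions. The resolution is that selecting $s$ to be the right derivative---pinned by the one-sided hypothesis to the interval $[0,p(w)]$---is exactly what is needed to force the two-sided support inequality. The degenerate case $w\in\mathrm{span}\{c\}$, including $w=0$, is subsumed: there the hypothesis forces $w=\beta c$ with $\beta\geq 0$, $\psi$ is piecewise linear with $s=\beta$, and the same extension goes through.
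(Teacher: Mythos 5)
Your proposal is correct, and it takes a genuinely different route through the hard (sufficiency) direction. The paper proves that direction by a dichotomy: either $p(u+\lambda_0 v) < p(u)$ for some $\lambda_0 < 0$, in which case it extends the functional $\mu c \mapsto \mu p(c)$ from $\mathrm{span}\{c\}$ by Hahn--Banach and observes that \emph{any} dominated extension is then forced to satisfy $f(v) > 0$; or $p(u+\lambda v) \geq p(u)$ for all real $\lambda$, in which case it defines $f(\mu c + \kappa v) = \mu p(c)$ on $\mathrm{span}\{c, v\}$, so that $f(v) = 0$, and checks domination directly from the two-sided hypothesis. You replace this case split by a single construction: the slope $s = \psi'_+(0)$ of the convex function $\psi(\lambda) = p(c+\lambda w)$ is pinned to $[0, p(w)]$ by the one-sided hypothesis, and the supporting-line inequality $\psi(\lambda) \geq 1 + s\lambda$ makes $f_0(c)=1$, $f_0(w)=s$ dominated on all of $\mathrm{span}\{c,w\}$; your verification in the $a<0$ direction, which is indeed the delicate point, is correct, as is the treatment of the degenerate case $w \in \mathrm{span}\{c\}$. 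What your route buys: no dichotomy, a canonical witness functional, and a clean reduction of all four sign branches of Definition \ref{oplus p ominus p} to one core claim via the substitutions $w = \pm v$, whereas the paper argues only the case $\sigma > 0$ with $\oplus_p$ and declares the remaining cases analogous. What it costs: the standard one-sided-derivative and supporting-line facts from convex analysis, which the paper's more elementary dichotomy avoids entirely. Finally, your normalization remark --- that $|f(z)| \leq p(z) \leq \max_i p_i(z) \leq 1$ for $z \in \mathcal{A}_Y$, so the supremum condition holds automatically and the extension genuinely lies in $\mathcal{S}_{Y^*}$ --- is a step the paper also needs but asserts without comment, so making it explicit is a small improvement.
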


\begin{proof}

Let us first prove the necessary part of the theorem. Let $ v\in u^{\oplus_p} $, for some $ v\in Y $ and for some $ p\in W_c $. Without loss of generality, we may assume that $ \sigma > 0 $. Therefore, there exists a $ p $-dominated linear functional $ f_{(p,c)} \in \mathcal{S}_{Y^*} $ such that $ f_{(p,c)}(c) = \underset{w\in \mathcal{A}_Y}{\sup}|f(w)| = p(c) = 1 $ and $ f_{(p,c)}(v)\geq 0 $. Now, for any scalar $ \lambda \geq 0 $ we have,
\[ p(u+ \lambda v) \geq f_{(p,c)}(u+ \lambda v) = f_{(p,c)}(u) + \lambda f_{(p,c)}(v) \geq f_{(p,c)}(u) = \sigma f_{(p,c)}(c) = p(u) .\]

 We now prove the sufficient part of the theorem. Suppose, $ v\in Y $ with some $ p\in W_c $ such that $ p(u+\lambda v) \geq p(u) $ for all $ \lambda \geq 0 $. Without loss of generality, we may assume that $ \sigma > 0 $. Now, we consider the following two cases:\\

{ Case I:} $ p(u+\lambda v) \geq p(u) $ for all scalars $ \lambda \geq 0 $ and for some negative scalar $ \lambda_0 $, $ p(u+ \lambda_0 v) < p(u) $.

 We define $ f: span\{ c \} \longrightarrow \mathbb{R} $ by $ f(\mu c ) = \mu p(c) $. Clearly, $ f $ is linear, dominated by $ p $ in $ span \{ c \} $ and hence $ f $ is continuous. Therefore, $ f $ possesses a continuous linear extension $ f_{(p,c)} : Y \longrightarrow \mathbb{R} $ such that $ | f_{(p,c)}(z)| \leq p(z) $ for all $ z\in Y $. Consequently, $ f_{(p,c)}(c) = \underset{w\in \mathcal{A}_Y}{\sup}|f_{(p,c)}(w)| = p(c) = 1 $. In other words, $ f_{(p,c)} \in \mathcal{S}_{Y^*} $. Now,
\[ p(u) > p(u + \lambda_0 v) \geq  f_{(p,c)}(u) + \lambda_0 f_{(p,c)}(v). \]

Since $ f_{(p,c)}(u) = p(u) $ and $ \lambda_0< 0 $, it follows that $ f_{(p,c)}(v) > 0 $.\\

{ Case II:} $ p(u+\lambda v) \geq p(u) $ for all scalars $ \lambda  $. 

It is easy to see that $ u $ and $ v $ must be linearly independent. We define $ f: span\{ c, v \} \longrightarrow \mathbb{R} $ by $ f( \mu c + \kappa  v) = \mu p (c) $. Now, for any non-zero scalar $ \mu $ we have,
\[ | f(\mu c + \kappa v ) | = | \mu p(c) |  = | \mu | p( c)  \leq | \mu | p( c + \frac{\kappa}{\mu} v)  = p( \mu  c + \kappa v ) .\]

\noindent Clearly, $ f $ is linear, $ f(v) = 0 $ and $ f $ is dominated by $ p $ in $ span \{ c, v \} $. Hence $ f $ is continuous and possesses a continuous linear extension $ f_{(p,c)} : Y \longrightarrow \mathbb{R} $ such that $ | f_{(p,c)}(z)| \leq p(z) $ for all $ z\in Y $. Consequently, $ f_{(p,c)}(c) = \underset{w\in \mathcal{A}_Y}{\sup}|f_{(p,c)}(w)| = p(c) = 1 $ and $ f_{(p,c)} \in \mathcal{S}_{Y^*} $. Using analogous techniques, we can similarly prove that $ v\in u^{\ominus_p} $, for some $ p\in W_c, $ if and only if $ p(u+\lambda v) \geq p(u) $ for all $ \lambda \leq 0 $. This completes the proof of the theorem.  

\end{proof}

The above theorem, in fact, provides some more information regarding the orthogonality space $ (\Gamma, \mathcal{S}_{Y^*}, \mathcal{A}_Y) $. We record the following obvious yet useful observation as a corollary to the above theorem. The proof of the corollary is omitted as it is trivial in view of Theorem \ref{9.semi norm orthogonality}.

\begin{cor}\label{10.semi norm equivalent orthogonality}

Let $ (Y,\Gamma) $ be the topological space and let $ \mathcal{A}_Y $ be the $ \rho $-admissible subset of $ Y $ defined as above. Let $ \mathcal{F} = \mathcal{S}_{Y^*} $. Let $ u \in Y\setminus \{0\} $ be such that $ b_u\in \mathcal{S}_Y $. Then for any $ v\in Y ,$ there exists $ p\in W_{b_u} $ such that the following three conditions are equivalent:\\
(i) $ u \perp _{(\Gamma, \mathcal{F}, \mathcal{A}_Y)} v $.\\
(ii) $ v\in u^{\oplus_p} \cap u^{\ominus_p} $.\\
(iii) $ p(u+\lambda v) \geq p(u) $ for all scalars $ \lambda $.

\end{cor}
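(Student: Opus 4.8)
The plan is to derive everything from Theorem \ref{9.semi norm orthogonality} together with a short bookkeeping argument about which optimal seminorm in $W_{b_u}$ to use. First I would record the equivalence (ii)$\Leftrightarrow$(iii), which holds for \emph{every} $p\in W_{b_u}$ and has nothing to do with orthogonality: by Theorem \ref{9.semi norm orthogonality}, $v\in u^{\oplus_p}$ is equivalent to $p(u+\lambda v)\geq p(u)$ for all $\lambda\geq 0$, while $v\in u^{\ominus_p}$ is equivalent to $p(u+\lambda v)\geq p(u)$ for all $\lambda\leq 0$; intersecting the two ranges of $\lambda$ gives precisely condition (iii). Note also that $b_u\in \mathcal{S}_Y$ forces $\max_i p_i(b_u)=1$, so $W_{b_u}\neq\emptyset$ and such a $p$ is always available.

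The real work is to select a single $p\in W_{b_u}$ for which (i) aligns with (iii), and I would split according to whether $u\perp_{(\Gamma,\mathcal{F},\mathcal{A}_Y)}v$ holds. Suppose first that it does. Then there is a witness $f\in \mathcal{S}_{Y^*}$ with $f(b_u)=\sup_{z\in\mathcal{A}_Y}|f(z)|$ and $f(b_v)=0$. I would first normalize the supremum: since each member of $\mathcal{S}_{Y^*}$ is dominated by some $q\in\{p_i\}_{i=1}^m$ and $\max_i p_i(z)\leq 1$ for every $z\in\mathcal{A}_Y$, we get $|f(z)|\leq 1$ on $\mathcal{A}_Y$, while $f$ takes value $\pm 1$ on $\mathcal{A}_Y$ at the element of $\mathcal{S}_Y$ defining it; hence $f(b_u)=1$. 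Now the dominating seminorm satisfies $1=|f(b_u)|\leq q(b_u)\leq \max_i p_i(b_u)=1$, so $q(b_u)=1$, i.e. $q\in W_{b_u}$. Taking $p=q$ and using $f(b_v)=0$ (so $f(v)=0$), for every scalar $\lambda$ we obtain $p(u+\lambda v)\geq |f(u+\lambda v)|=|f(u)|=p(u)$, which is exactly (iii). Thus (i), (ii), (iii) all hold for this $p$.

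For the complementary case, suppose $u\not\perp_{(\Gamma,\mathcal{F},\mathcal{A}_Y)}v$. Here I would argue that (iii) must fail for \emph{every} $p\in W_{b_u}$: if (iii) held for some such $p$, then the construction in Case~II of the proof of Theorem \ref{9.semi norm orthogonality} produces a $p$-dominated functional $f_{(p,b_u)}\in\mathcal{S}_{Y^*}$ with $f_{(p,b_u)}(b_u)=\sup_{w\in\mathcal{A}_Y}|f_{(p,b_u)}(w)|=1$ and $f_{(p,b_u)}(v)=0$, which witnesses $u\perp_{(\Gamma,\mathcal{F},\mathcal{A}_Y)}v$, a contradiction. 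Consequently (ii) also fails for every $p\in W_{b_u}$ by the first paragraph, so choosing any $p\in W_{b_u}$ makes all three conditions simultaneously false, hence (vacuously) equivalent.

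The step I expect to be the only genuine obstacle is the identification $f(b_u)=1$ and the ensuing conclusion $q\in W_{b_u}$, since this is exactly what ties the $p$-free condition (i) to a specific optimal seminorm. Once the supremum over $\mathcal{A}_Y$ is pinned to $1$ using $\max_i p_i\leq 1$ on $\mathcal{B}$, the rest is a direct transcription of Theorem \ref{9.semi norm orthogonality}, which is why the statement can fairly be called an immediate corollary.
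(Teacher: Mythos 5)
Your proposal is correct and takes essentially the paper's own route: the paper omits the proof as ``trivial in view of Theorem~\ref{9.semi norm orthogonality}'', and your argument is exactly that reduction carried out in detail --- (ii)$\Leftrightarrow$(iii) for every $p\in W_{b_u}$ from the statement of Theorem~\ref{9.semi norm orthogonality}, (i)$\Rightarrow$(iii) by extracting the optimal seminorm $q\in W_{b_u}$ dominating the witness functional, and (iii)$\Rightarrow$(i) via the Case~II Hahn--Banach construction from that theorem's proof. The only point left implicit is the degenerate case $v=0$, where all three conditions hold for any $p\in W_{b_u}$ by the zero conventions in the definitions, so your case analysis goes through unchanged.
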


As an application of the above theorem, we now obtain a complete characterization of the orthogonality of continuous linear operators in the desired setting.
 
\begin{theorem}\label{11.characterization}

Let $ (Y,\Gamma) $ be the vector space topologized by a finite family of semi-norms $ \{p_i\}_{i=1}^m $ as above. Let $ (X, \uptau),~ \mathcal{A}_X $ be defined as above. Let $ P $ be the semi-norm on $ \mathbb{L}(X, Y) $, induced by the family $ \{ p_i\}_{i=1}^m $. Let $ \mathcal{F} = \mathcal{S}_{\mathbb{L}(X,Y)^*} $. Let $ T, A \in \mathbb{L}(X, Y) $ with $ P(T) \neq 0 $. Then $ T\perp_{(P, \mathcal{S}_{\mathbb{L}(X, Y)^*}, \mathbb{A}_{\mathbb{L}(X, Y)})} A $ if and only if there exist $ x, y \in \mathcal{M}_T $ with $ p, q \in \mathcal{P}_T $ such that $ p(T(x)) = q(T(y)) = P(T) $ and $  Ax\in Tx^{\oplus_p} $, $  Ay\in Ty^{\ominus_q}. $

\end{theorem}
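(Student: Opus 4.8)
The plan is to reduce the operator orthogonality to a single semi-norm inequality and then run a compactness-and-convexity argument in the spirit of the Bhatia-$\breve{S}$emrl and Sain characterizations.

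First I would set up the reduction. Since $ \mathbb{L}(X,Y) $ has been topologized by the singleton family $ \{P\} $ in exactly the same manner as $ Y $ was topologized by $ \{p_i\}_{i=1}^m $, both Theorem \ref{9.semi norm orthogonality} and Corollary \ref{10.semi norm equivalent orthogonality} apply verbatim to $ (\mathbb{L}(X,Y), P) $ with $ \mathcal{F} = \mathcal{S}_{\mathbb{L}(X,Y)^*} $. As $ P(T)\neq 0 $, the representative $ a_T\in\mathbb{A}_{\mathbb{L}(X,Y)} $ satisfies $ P(a_T)=1 $, so $ a_T\in\mathcal{S}_{\mathbb{L}(X,Y)} $ and $ W_{a_T}=\{P\} $. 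Hence the operator analogue of Corollary \ref{10.semi norm equivalent orthogonality} gives the key equivalence
\[ T\perp_{(P, \mathcal{S}_{\mathbb{L}(X,Y)^*}, \mathbb{A}_{\mathbb{L}(X,Y)})} A \iff P(T+\lambda A)\geq P(T) \text{ for all scalars } \lambda. \]
All subsequent work is to characterize the right-hand inequality in terms of $ \mathcal{M}_T $ and $ \mathcal{P}_T $.

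For sufficiency, suppose $ x,y\in\mathcal{M}_T $ and $ p,q\in\mathcal{P}_T $ are as stated. I would first note that $ p(Tx)=P(T) $ forces $ p\in W_{b_{Tx}} $: writing $ Tx=\sigma b_{Tx} $, if $ p(b_{Tx})<1 $ then some $ p' $ with $ p'(b_{Tx})=1 $ would give $ p'(Tx)=|\sigma|>|\sigma|p(b_{Tx})=P(T) $, contradicting $ P(T)=\max_i\max_{a\in\mathcal{A}}p_i(Ta)\geq p'(Tx) $; so the symbols $ Tx^{\oplus_p} $ and $ Ty^{\ominus_q} $ are well defined. Applying the necessity part of Theorem \ref{9.semi norm orthogonality}, $ Ax\in Tx^{\oplus_p} $ yields $ p(Tx+\lambda Ax)\geq p(Tx)=P(T) $ for all $ \lambda\geq 0 $, and $ Ay\in Ty^{\ominus_q} $ yields $ q(Ty+\lambda Ay)\geq q(Ty)=P(T) $ for all $ \lambda\leq 0 $. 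Since $ x,y\in\mathcal{A} $ and $ p,q\in\{p_i\}_{i=1}^m $, for $ \lambda\geq 0 $ we get $ P(T+\lambda A)\geq p((T+\lambda A)x)\geq P(T) $ and for $ \lambda\leq 0 $ we get $ P(T+\lambda A)\geq q((T+\lambda A)y)\geq P(T) $; thus the inequality holds for all $ \lambda $ and orthogonality follows from the reduction.

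The necessity direction is the crux and the main obstacle. Assume $ P(T+\lambda A)\geq P(T)=k $ for all $ \lambda $. Taking $ \lambda_n\downarrow 0 $, compactness of $ \mathcal{A} $ and continuity of the $ p_i $ let me choose maximizers $ a_n\in\mathcal{A} $ and indices $ i_n $ with $ P(T+\lambda_n A)=p_{i_n}((T+\lambda_n A)(a_n)) $; as there are finitely many semi-norms I pass to a subsequence along which $ i_n $ is a constant semi-norm $ p $ and $ a_n\to x\in\mathcal{A} $. Continuity of $ P $ gives $ P(T+\lambda_n A)\to P(T)=k $, while $ (T+\lambda_n A)(a_n)\to Tx $ gives $ p((T+\lambda_n A)(a_n))\to p(Tx) $; hence $ p(Tx)=k=P(T) $, so $ x\in\mathcal{M}_T $ and $ p\in\mathcal{P}_T $. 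To upgrade this to $ Ax\in Tx^{\oplus_p} $, I would exploit convexity of $ t\mapsto p(Ta_n+tAa_n) $: from $ p(Ta_n+\lambda_n Aa_n)=P(T+\lambda_n A)\geq k\geq p(Ta_n) $ and convexity one deduces $ p(Ta_n+tAa_n)\geq p(Ta_n) $ for every $ t\geq\lambda_n $; fixing $ \lambda>0 $ and letting $ n\to\infty $ (so eventually $ \lambda_n<\lambda $) yields $ p(Tx+\lambda Ax)\geq p(Tx) $, valid for all $ \lambda\geq 0 $, whence $ Ax\in Tx^{\oplus_p} $ by Theorem \ref{9.semi norm orthogonality}. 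Running the same argument with $ \lambda_n\uparrow 0 $ produces $ y\in\mathcal{M}_T $, $ q\in\mathcal{P}_T $ with $ q(Ty)=P(T) $ and $ q(Ty+\lambda Ay)\geq q(Ty) $ for all $ \lambda\leq 0 $, i.e.\ $ Ay\in Ty^{\ominus_q} $. The delicate points are the simultaneous extraction of a stable semi-norm together with a convergent maximizer, and the convexity step that converts the infinitesimal inequality at $ \lambda_n $ into the global inequality over all $ \lambda\geq 0 $; these are precisely where compactness of $ \mathcal{A} $ and finiteness of $ \{p_i\}_{i=1}^m $ are indispensable.
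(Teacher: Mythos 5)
Your reduction via Corollary \ref{10.semi norm equivalent orthogonality} (using $P(T)\neq 0$ to conclude that the representative $a_T$ lies in $\mathcal{S}_{\mathbb{L}(X,Y)}$ with $W_{a_T}=\{P\}$) and your sufficiency argument coincide with the paper's; moreover, your verification that $p(Tx)=P(T)$ forces $p\in W_{b_{Tx}}$ -- without which the symbol $Tx^{\oplus_p}$ is not even defined -- is correct and is a detail the paper leaves implicit. In the necessity direction you genuinely diverge from the paper: there, the proof is by contradiction, assuming (without loss of generality) that every maximizing pair $(x,p)$ satisfies $Ax\in Tx^{\oplus_p}$ and $Ax\notin Tx^{\ominus_p}$, and then using joint continuity of $(u,\lambda)\mapsto p_i(Tu+\lambda Au)$, convexity, and a finite subcover of the compact set $\mathcal{A}$ to manufacture a single $\lambda_0<0$ with $P(T+\lambda_0A)<P(T)$, contradicting orthogonality. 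You instead extract $x$, $p$ (and $y$, $q$) directly as limits of maximizers of $T+\lambda_nA$ as $\lambda_n\downarrow 0$ (resp.\ $\lambda_n\uparrow 0$), stabilizing the optimal semi-norm by pigeonhole and using convexity of $t\mapsto p(Ta_n+tAa_n)$ to promote the inequality at $t=\lambda_n$ to all $t\geq\lambda_n$. That convexity computation is right, and this direct ``limit of maximizers'' argument is a legitimate alternative which in addition avoids the case analysis of the paper.

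There is, however, one genuine gap: the step ``$a_n\to x\in\mathcal{A}$ after passing to a subsequence'' invokes sequential compactness, while the hypothesis only gives compactness of $\mathcal{A}=\mathcal{A}_X\cup(-\mathcal{A}_X)$ in the topology $\uptau$. Here $\uptau$ is completely arbitrary subject to that compactness -- it need not be metrizable, first countable, or even Hausdorff -- and a compact space need not be sequentially compact, so the convergent subsequence you use need not exist; since this arbitrary-topology generality is precisely the point of the theorem, the step cannot stand as written. The repair is standard: first pass to a subsequence on which $i_n\equiv p$ is constant (pigeonhole, since the family $\{p_i\}_{i=1}^m$ is finite), then pass to a subnet $(a_{n_\alpha})$ converging to some $x\in\mathcal{A}$, which exists because every net in a compact space has a convergent subnet. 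Every subsequent step survives this substitution: continuity of $T$, $A$, $p$ and of the vector operations of $(Y,\Gamma)$ pushes net convergence through; the convexity inequality is pointwise in the index; ``eventually $\lambda_{n_\alpha}<\lambda$'' holds because a subnet of $\lambda_n\downarrow 0$ still converges to $0$; and all terminal limits are taken in $\mathbb{R}$, where they are unique. With subnets in place of subsequences, your proof is complete.
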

 
\begin{proof}
If $ A $ is zero then the statement of the theorem holds trivially from the corresponding definitions. Let $ T, A \in \mathbb{L}(X, Y) $ be non-zero. Let us first prove the sufficient part of the theorem. Since $ Ax\in Tx^{\oplus_p} $, therefore, for any scalar $ \lambda \geq 0 $, $ P(T+ \lambda A) \geq p(Tx + \lambda Ax) \geq p(Tx) = P(T) $. Similarly, since $  Ay\in Ty^{\ominus_q} $, therefore, for any scalar $ \lambda \leq 0 $, $ P(T+ \lambda A) \geq q(Ty + \lambda Ay) \geq q(Ty) = P(T) $. Applying Corollary \ref{10.semi norm equivalent orthogonality} on $ (P,\mathcal{S}_{\mathbb{L}(X,Y)^*},\mathbb{A}_{\mathbb{L}(X,Y)}) $, we have $ T\perp_{(P, \mathcal{S}_{\mathbb{L}(X, Y)^*}, \mathbb{A}_{\mathbb{L}(X, Y)})} A $. We next prove the necessary part of the theorem. Let $ T, A \in \mathbb{L}(X, Y) $ be non-zero. If possible, suppose, the statement is not true. Now, given any $ x  \in \mathcal{M}_T $ and any $ p\in \mathcal{P}_T $ with $ P(T) = p(T(x)) $, it is easy to see that either $  Ax\in Tx^{\oplus_p} $, or $  Ax\in Tx^{\ominus_p}. $ Therefore, without loss of generality, we may assume that for each $ x\in \mathcal{M}_T $ and for each $ p\in \mathcal{P}_T $ with $ P(T) = p(T(x)) $, $ Ax\in Tx^{\oplus_p} $ and $ Ax\notin Tx^{\ominus_p} $. Consider any $ x\in \mathcal{M}_T $. For each $ p_i\in \{ p_i \}_{i=1}^m $, consider the function $ g^i_x : \mathcal{A} \times [-1, 1] \longrightarrow \mathbb{R} $ defined by 
\[ g^i_x(u, \lambda) = p_i(T u + \lambda Au). \]

\noindent It is straightforward to check that $ g^i_x $ is continuous. Since for every $ i\in \{ 1,2, \dots, m \} $, there exists $ \lambda_{i,x} < 0 $ such that $ p_i(Tx + \lambda_{i,x} Ax) < P(T) $, it follows that $ g^i_x(x, \lambda) < P(T) $. Therefore, by continuity of $ g^i_x, $ there exists an open set $ V_{i,x} $ containing $ x $, in the subspace topology of $ \mathcal{A} $ and $ \delta_{i,x} > 0 $ such that $ g^i_x(w, \lambda) < P(T) $ for each $ w \in V_{i,x} $ and for each $ \lambda \in (\lambda_{i,x} - \delta_{i,x} , \lambda_{i,x} + \delta_{i,x} ).$ Using convexity property of the semi-norm function, it is easy to show that $ g^i_x(w, \lambda) = p_i(T w + \lambda Aw ) < P(T) $ for all $ w \in V_{i,x} $ and for all $ \lambda\in (\lambda_{i,x} , 0).$ Let $ V_x = \bigcap\limits_{i=1}^m V_{i,x} $ and let $ \lambda_x = \frac{1}{2}\min~ \{ \lambda_{i,x} \} $. For any $ z \in \mathcal{A} \setminus \mathcal{M}_T $, we have $ g^i_x(z, 0) = p_i(T z) < P( T ) $. Thus by continuity of $ g^i_x $, there exists an open set $ V_{i,z} $ containing $ z $, in the subspace topology of $ \mathcal{A} $ and $ \delta_{i,z} > 0 $ such that $ g^i_x(y, \lambda) = p_i(T y + \lambda Ay) < P (T) $ for all $ y \in V_{i,z} $ and for all $ \lambda \in (-\delta_{i,z} , \delta_{i,z} ). $ Let $ V_z = \bigcap\limits_{i=1}^m V_{i,z} $ and $ \delta_z = \frac{1}{2}\min~\{ \delta_{i,z} \} $. Clearly, $ \{ V_x : x \in \mathcal{M}_T \} \cup \{ V_z : z\in \mathcal{A} \setminus \mathcal{M}_T \} $ forms an open cover of $ \mathcal{A} $ . Since $ \mathcal{A} $ is compact, this open cover admits a finite sub-cover. Therefore,
\[  \mathcal{A} \subseteq ((\bigcup\limits_{r = 1}^{k_1} V_{x_r} ) \cup (\bigcup\limits_{s = 1}^{k_2} V_{z_s})) ,\]

\noindent for some natural numbers $ k_1 $ and $ k_2 $. Choose $ \lambda _ 0 \in (\bigcap\limits_{r = 1}^{k_1}(\lambda_{x_r }, 0 )) \cap  (\bigcap\limits_{s = 1}^{k_2}(- \delta_{z_s} ,  \delta_{z_s }))  .$ Since $ \mathcal{A} $ is compact, $ \mathcal{M}_{T + \lambda_ 0 A} \neq \emptyset $. Let $ w_0 \in \mathcal{M}_{T + \lambda_ 0 A} $ and $ \hat p\in \mathcal{P}_{T + \lambda_0 A} $. Then either $ w_0 \in V_{x_r} $ for some $ x_r \in \mathcal{M}_ T $,  or $ w _0 \in V_{z_s} $ for some $ z _s \in  \mathcal{A}\setminus \mathcal{M}_T $ . In either case, it follows from the choice of $ \lambda_ 0$  that $ P( T +\lambda_0 A) = \hat p((T +\lambda_0 A)w_0)  < P( T ) $. Now, applying Corollary \ref{10.semi norm equivalent orthogonality} on $  (P,\mathcal{S}_{\mathbb{L}(X,Y)^*},\mathbb{A}_{\mathbb{L}(X,Y)}) $, we get a contradiction to our primary assumption that $ T\perp_{(P, \mathcal{S}_{\mathbb{L}(X, Y)^*}, \mathbb{A}_{\mathbb{L}(X, Y)})} A $. This proves the necessary part of the theorem and thereby establishes it completely.

\end{proof}

Our final result of the present article is the observation that Theorem $ 2.2 $ of \cite{S} can be obtained as a corollary to the above theorem. Given any two Banach spaces $ X $ and $ Y $, we use the same symbol $ \mathbb{L}(X,Y) $ to denote the Banach space of all continuous linear operators from $ X $ to $ Y $, endowed with the usual operator norm.

\begin{cor}\label{12. operator orthogonality}
Let $ X $ be a finite-dimensional Banach space and let $ Y $ be any Banach space. Let $ T, A \in \mathbb{L}(X, Y) $. Then $ T\perp_B A $ if and only if there exist $ x, y \in M_T $ such that $ Ax\in Tx^+ $ and $ Ay\in Ty^- $.
\end{cor}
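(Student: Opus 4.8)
The plan is to realize Theorem~\ref{Debmalya Sain} as the special case of Theorem~\ref{11.characterization} obtained by taking the topology $\uptau$ on $X$ and the topology $\Gamma$ on $Y$ to be the respective norm topologies, and by choosing the semi-norm family on $Y$ so that the induced semi-norm $P$ on $\mathbb{L}(X,Y)$ coincides with the operator norm. Concretely, I would first address the hypotheses required by Theorem~\ref{11.characterization}: $X$ is finite-dimensional, so I take $\uptau = \uptau_{\|\cdot\|}$ and choose a $\rho$-admissible set $\mathcal{A}_X \subset S_X$, whence $\mathcal{A} = \mathcal{A}_X \cup (-\mathcal{A}_X) = S_X$ is compact precisely because $X$ is finite-dimensional. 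For $Y$, since a single norm is just one semi-norm, I take $m=1$ and $p_1 = \|\cdot\|_Y$; then the induced semi-norm on $\mathbb{L}(X,Y)$ is $P(T) = \max_{a\in S_X}\|T(a)\|_Y = \|T\|$, the usual operator norm. With these identifications the ambient orthogonality $\perp_{(P,\mathcal{S}_{\mathbb{L}(X,Y)^*},\mathbb{A}_{\mathbb{L}(X,Y)})}$ must be shown to coincide with Birkhoff--James orthogonality $\perp_B$ on $\mathbb{L}(X,Y)$; this is exactly the content of Corollary~\ref{10.semi norm equivalent orthogonality} applied to the single-semi-norm space $(\mathbb{L}(X,Y),P)$, which identifies $T\perp A$ with $P(T+\lambda A)\ge P(T)$ for all $\lambda$, i.e.\ $\|T+\lambda A\|\ge\|T\|$ for all scalars, which is $T\perp_B A$.

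Next I would match up the auxiliary objects. In the single-semi-norm setting every $a\in S_X$ with $\|T(a)\|=\|T\|$ lies in $\mathcal{M}_T$, and since there is only one semi-norm, $\mathcal{P}_T = \{p_1\} = \{\|\cdot\|_Y\}$ trivially. Thus $\mathcal{M}_T$ reduces to $\{a\in S_X : \|Ta\|=\|T\|\}$, which (up to the symmetrization by $\pm$) is exactly the norm attainment set $M_T$ of Theorem~\ref{Debmalya Sain}; one only needs to note that $M_T$ is symmetric under $x\mapsto -x$ so the two descriptions agree. The remaining task is to verify that the component relations $Tx^{\oplus_p}$ and $Ty^{\ominus_q}$ of Definition~\ref{oplus p ominus p} collapse to the classical one-sided cones $Tx^{+}$ and $Ty^{-}$. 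Here I would invoke Theorem~\ref{9.semi norm orthogonality}: with $p=q=\|\cdot\|_Y$, the statement $Ax\in Tx^{\oplus_p}$ is equivalent to $\|Tx+\lambda Ax\|\ge\|Tx\|$ for all $\lambda\ge 0$, which is precisely the definition $Ax\in (Tx)^{+}$; symmetrically $Ay\in Ty^{\ominus_q}$ unwinds to $\|Ty+\lambda Ay\|\ge\|Ty\|$ for all $\lambda\le 0$, i.e.\ $Ay\in (Ty)^{-}$.

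With these three dictionaries in place---orthogonality $\leftrightarrow$ Birkhoff--James, $\mathcal{M}_T \leftrightarrow M_T$, and $\oplus_p/\ominus_q \leftrightarrow {}^{+}/{}^{-}$---the equivalence asserted in Theorem~\ref{11.characterization} translates verbatim into the equivalence asserted in the corollary, and the result follows. I expect the main obstacle to be the bookkeeping around the $\pm$-symmetrization: the framework of Section~3 works with $\mathcal{A}=\mathcal{A}_X\cup(-\mathcal{A}_X)$ and with functionals normalized so that $P(T)=1$, whereas Theorem~\ref{Debmalya Sain} is stated directly for $M_T\subseteq S_X$ without such normalization. Care is needed to check that when the supremum-attaining element $a_x$ equals $-x/\|x\|$ rather than $x/\|x\|$, the sign conventions built into Definition~\ref{oplus p ominus p} (the clause for $\mu<0$ swapping $\oplus_p$ and the inequality direction) still deliver the correct classical cone; tracing the homogeneity of $\oplus_p,\ominus_q$ through this case is the one point that warrants explicit verification, but it is routine once the definitions are unwound.
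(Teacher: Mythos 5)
Your proposal follows essentially the same route as the paper's own proof: choose $\mathcal{A}_X \subset S_X$ so that $\mathcal{A} = S_X$ is compact, take the single semi-norm $p = \|\cdot\|_Y$ so that the induced $P$ is the operator norm, identify $\mathcal{M}_T$ with $M_T$ and $\mathcal{S}_{Y^*}$, $\mathcal{S}_{\mathbb{L}(X,Y)^*}$ with the corresponding unit spheres, and then let Theorem~\ref{11.characterization} together with Theorem~\ref{9.semi norm orthogonality} and Corollary~\ref{10.semi norm equivalent orthogonality} translate $\oplus_p/\ominus_q$ into the classical cones $Tx^{+}$, $Ty^{-}$. Your extra attention to the $\pm$-symmetrization and the sign conventions of Definition~\ref{oplus p ominus p} is a point the paper passes over silently, but it does not change the argument, which is correct as proposed.
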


\begin{proof}
Let $ \mathcal{A}_X $ be a $ \rho $-admissible subset of $ X $ such that each element of $ \mathcal{A}_X $ has norm one, i.e., $ \mathcal{A}_X \subset S_X $. It is trivial to see that $ ((\mathcal{A}_X)\cup (-\mathcal{A}_X)) = S_X $. Since $ X $ is finite-dimensional, $ S_X $ is compact. Let $ p $ be the norm associated to the Banach space $ Y $ and let $ P $ be the norm in $ \mathbb{L}(X, Y) $ induced by $ p $. Let $ \mathcal{A}_Y $ be a $ \rho $-admissible subset of $ Y $ and let $ \mathbb{A}_{\mathbb{L}(X, Y)} $ be a $ \rho $-admissible subset of $ \mathbb{L}(X, Y) $. Without loss of generality, we choose $ \mathcal{A}_Y $ and $ \mathcal{A}_{\mathbb{L}(X, Y)} $ to be such that $ \mathcal{A}_Y \subset S_Y $ and $ \mathcal{A}_{\mathbb{L}(X, Y)} \subset S_{\mathbb{L}(X, Y)} $. It is trivial to see that $ ((\mathcal{A}_Y)\cup (-\mathcal{A}_Y)) = S_Y $ and $ ((\mathcal{A}_{\mathbb{L}(X, Y)}) \cup  (-\mathcal{A}_{\mathbb{L}(X, Y)})) = S_{\mathbb{L}(X, Y)} $. It is easy to see that $ P $ coincides with the usual operator norm in $ \mathbb{L}(X, Y) $. In addition,  $ \mathcal{M}_T $ coincides with the norm attainment set $ M_T $ of $ T $, i.e., $ \mathcal{M}_T = M_T = \{ x\in S_X : P(T) = p(T(x)) \} $. We should also note that in this setting, $ \mathcal{S}_{ Y^*} = S_{ Y^*} $ and $ \mathcal{S}_{\mathbb{L}(X, Y)^*} = S_{\mathbb{L}(X, Y)^*} $. Therefore, by above theorem, $ T\perp_B A $ if and only if there exist $ x, y \in M_T $ such that $ p(Tx + \lambda Ax) \geq p(Tx) $ for all $ \lambda \geq 0 $ and $ p(Ty + \mu Ay) \geq p(Ty) $ for all $ \mu \leq 0 $. In other words, for $ T, A \in \mathbb{L}(X, Y) $,  $ T\perp_B A $ if and only if there exist $ x, y \in M_T $ such that $ Ax\in Tx^+ $ and $ Ay\in Ty^- $. This completes the proof of the corollary.
\end{proof}

In view of the concept of orthogonality introduced by us in the present article, it is perhaps appropriate to end it with the following remark.\\

\begin{remark}
Theorem $ 1.1 $ of \cite{BS}, also known as the Bhatia-$\breve{S}$emrl Theorem, gives a complete characterization of Birkhoff-James orthogonality of linear operators on finite-dimensional Hilbert spaces. Theorem $ 2.2  $ of \cite{S} generalizes the Bhatia-$\breve{S}$emrl Theorem to linear operators between Banach spaces. Indeed, applying Theorem $ 2.1 $ and Theorem $ 2.2 $ of \cite{SP}, it is easy to see that the Bhatia-$\breve{S}$emrl Theorem follows from Theorem $ 2.2  $ of \cite{S}. On the other hand, as illustrated in Corollary \ref{12. operator orthogonality}, Theorem \ref{11.characterization} of the present article generalizes Theorem $ 2.2  $ of \cite{S}. Therefore, as an application of the concept of orthogonality introduced by us, we obtain a topological version of the Bhatia-$\breve{S}$emrl Theorem. The original Bhatia-$\breve{S}$emrl Theorem considers linear operators on a finite-dimensional Hilbert space. We have generalized this to a much broader context. In our setting, orthogonality of linear operators in $ \mathbb{L}(X,Y) $ is characterized, where $ X $ only needs to be a finite-dimensional vector space with a topology such that $ ((\mathcal{A}_X) \cup (-\mathcal{A}_X)) $ is compact and $ Y $ is a vector space topologized by a finite family of non-trivial semi-norms. The usefulness and applicability of the abstract notions developed in this article is illustrated by the fact that the fundamental principle behind the Bhatia-$\breve{S}$emrl Theorem (and its generalization to Banach spaces, as given in Theorem $ 2.2 $ of \cite{S}) can be immediately generalized to a much broader setting by using these notions.
\end{remark}

\end{document}